\providecommand{\U}[1]{\protect\rule{.1in}{.1in}}
\providecommand{\U}[1]{\protect\rule{.1in}{.1in}}
\newtheorem{theorem}{Theorem}
\newtheorem{condition}[theorem]{Condition}
\newtheorem{definition}[theorem]{Definition}
\newtheorem{example}[theorem]{Example}
\newtheorem{lemma}[theorem]{Lemma}
\newtheorem{proposition}[theorem]{Proposition}
\newtheorem{remark}[theorem]{Remark}
\begin{document}
\title[Band-limited Vectors on Nilpotent Lie Groups]{Sinc-Type Functions on a Class of Nilpotent Lie Groups}
\author[V. Oussa]{Vignon Oussa}
\address{Dept.\ of Mathematics \& Computer Science\\
Bridgewater State University\\
Bridgewater, MA 02325 U.S.A.\\}
\email{vignon.oussa@bridgew.edu}
\date{May 2013}
\keywords{sampling, interpolation, nilpotent Lie groups, representations}
\subjclass[2000]{22E25, 22E27}

\begin{abstract}
Let $N$ be a simply connected, connected
nilpotent Lie group with the following assumptions. Its Lie algebra $\mathfrak{n}$ is an $n$-dimensional vector space over the reals. Moreover,  $\mathfrak{n=z}\oplus\mathfrak{b}\oplus\mathfrak{a}$, $\mathfrak{z}$
is the center of $\mathfrak{n}$,
$\mathfrak{z}   =\mathbb{R}Z_{n-2d}\oplus\mathbb{R}Z_{n-2d-1}\oplus\cdots\oplus
\mathbb{R}Z_{1}, \mathfrak{b}   =\mathbb{R}Y_{d}\oplus\mathbb{R}
Y_{d-1}\oplus\cdots\oplus\mathbb{R}Y_{1},
\mathfrak{a}    =\mathbb{R}X_{d}\oplus\mathbb{R}X_{d-1}\oplus\cdots\oplus \mathbb{R}X_{1}.$ Next, assume $\mathfrak{z}\oplus\mathfrak{b}$ is a maximal commutative ideal of
$\mathfrak{n},$ $\left[  \mathfrak{a,b}\right]  \subseteq\mathfrak{z},$  and  $\mathrm{det}\left([X_i,Y_j]\right)_{1\leq i,j\leq d}$  is a non-trivial
homogeneous polynomial defined over the ideal $\left[  \mathfrak{n,n}\right]
\subseteq\mathfrak{z}.$  We do not assume that $[\mathfrak{a},\mathfrak{a}]$ is generally trivial. We obtain some precise description of band-limited spaces which are sampling subspaces of $L^2(N)$ with respect to some discrete set $\Gamma$.  The set $\Gamma$ is explicitly constructed by fixing a strong Malcev basis for $\mathfrak{n}.$ We provide sufficient conditions for which a function $f$ is determined from its sampled values on $(f(\gamma))_{\gamma \in\Gamma}.$ We also provide an explicit formula for the corresponding sinc-type functions. Several examples are also computed in the paper.
\end{abstract}
\maketitle
\section{Introduction}
Let $\Omega$ be a positive number. A function $f$ in $L^{2}\left(\mathbb{R}\right)  $ is called $\Omega$-band-limited if its Fourier transform:
$\mathcal{F}f\left(  \lambda\right)$ is equal to zero for almost every  $\lambda$ outside of the interval $[-\Omega,\Omega].$ According to the well-known \textbf{Shannon-Whittaker-Kotel'nikov} theorem, $f$ is determined
by its sampled values $\left(f\left(  \frac{\pi n}{\Omega}\right)\right)_{n\in \mathbb{Z}} .$ In fact, for any function in the Hilbert space $$
\mathbf{H}_{\Omega}=\left\{  f\in L^{2}\left(\mathbb{R}\right)  :\text{ support }\left(  \mathcal{F}f\right)  \subset\left[
-\Omega,\Omega\right]  \right\}$$we
have the following reconstruction formula
\begin{equation}
f\left(  x\right)  =\sum_{n\in\mathbb{Z}}f\left(  \frac{\pi n}{\Omega}\right)  \frac{\sin\left(  \pi\left(
x-k\right)  \right)  }{\left(  \pi\left(  x-k\right)  \right)  }
\label{construction}
\end{equation}
and we say that $\mathbf{H}_{\Omega}$ is a \textbf{sampling subspace} of $L^{2}\left(\mathbb{R}\right)  $ with respect to the lattice $\frac{\pi}{\Omega}\mathbb{Z}.$ A relatively novel problem in abstract harmonic analysis has been to find analogues of (\ref{construction}) for other locally compact groups \cite{osc,Fuhr cont,pese,pesenson,oussa,oussa1}. Any attempt to generalize the given formula above leads to several obstructions. 
\begin{enumerate}
\item Let us recall that a unitary representation $\pi$ of a locally compact group $G$ is a factor representation if the center of the commutant algebra of $\pi$ is trivial, in the sense that it consists of scalar multiples of the identity operator. Moreover, $G$ is said to be a type I group if every factor representation of the group is a direct sum of copies of some irreducible representation. In general, harmonic analysis on non-type I groups is not well understood (see \cite{Folland}). For example the classification (in a reasonable sense) of the unitary dual of a non-type I group is a hopeless quest. Thus, for non-type I groups, it is not clear how to define a natural notion of band-limitation.
\item For type I groups, there exist a group Fourier transform, a Plancherel theory, and a natural notion of band-limitation. In general, the Plancherel transform intertwines the left regular representation of the group with a direct integral of unitary irreducible representations occurring with some multiplicities.  Although there is a nice notion of band-limitation available, the fact that we have to deal with multiplicities (unlike the abelian case) is a serious obstruction that needs to be addressed. 
\item Even if we manage to deal with the issues related to the presence of multiplicity functions of the irreducible representations occurring in the decomposition of the left regular representation of the group, the irreducible representations which are occurring are not just characters like in the abelian case. We often have to deal with some Hilbert Schimdt operators whose actions are usually not well understood.
\item It is not clear what the sampling sets should be in general. Actually, in many examples it turns out that requiring the sampling sets to be groups or lattices is too restrictive. 
\end{enumerate}

Let $G$ be a locally compact group, and let
$\Gamma$ be a discrete subset of $G.$ Let $\mathbf{H}$ be a left-invariant
closed subspace of $L^{2}\left(  G\right)  $ consisting of continuous
functions. According to Definition $2.51,$ \cite{Fuhr cont},  a Hilbert space $\mathbf{H}$ is a \textbf{sampling space} with respect to
$\Gamma$ if the following properties hold. First, the mapping
\[
R_{\Gamma}:\mathbf{H}\xrightarrow{\hspace*{1cm}} l^{2}\left(  \Gamma\right)  ,\text{
}R_{\Gamma}f=\left(  f\left(  \gamma\right)  \right)  _{\gamma\in\Gamma}%
\]
is an isometry (or a scalar multiple of an isometry). In other words, for all $f\in\mathbf{H},$ $
\sum_{\gamma\in\Gamma}\left\vert f\left(  \gamma\right)  \right\vert
^{2}=\left\Vert f\right\Vert _{\mathbf{H}}^{2}.$ Secondly, there exists a vector $s\in\mathbf{H}$ such that for any vector
$f\in\mathbf{H},$ we have the following expansion $
f\left(  x\right)  =\sum_{\gamma\in\Gamma}f\left(  \gamma\right)  s\left(
\gamma^{-1}x\right)$ with convergence in the $L^{2}$-norm of $\mathbf{H.}$ The function $s$ is
called a \textbf{sinc-type} function. We remark that there are several versions of definitions of sampling spaces. The definition which is usually encountered in the literature only requires the restriction map $R_{\Gamma}$ to be a bounded map with a bounded inverse. 

Let $\left(  \pi,\mathbf{H}_{\pi
}\right)  $ denote a strongly continuous unitary representation of a locally
compact group $G.$ We say that the representation $\left(  \pi,\mathbf{H}
_{\pi}\right)  $ is \textbf{admissible} if and only if the map
\[
V_{\phi}:\mathbf{H_{\pi}\xrightarrow{\hspace*{1cm}}}L^{2}\left(  G\right)  ,\text{ }V_{\phi}\psi\left(  x\right)  =\left\langle \psi,\pi\left(  x\right)  \phi
\right\rangle
\]
defines an isometry of $\mathbf{H}_{\pi}$ into $L^{2}\left(  G\right)  ,$ and we say
that $\phi$ is an \textbf{admissible vector} or a \textbf{continuous wavelet}. It is known that if $\pi$ is the left regular representation of $G,$ and if
$G$ is connected and type I, then $\pi$ is admissible if and only if $G$ is
nonunimodular (See \cite{Fuhr cont} Theorem 4.23). The following fact is proved in Proposition 2.54 in \cite{Fuhr
cont}.  Let $\phi$ be an admissible vector for $\left(  \pi,\mathbf{H}_{\pi
}\right)  $ such that $\pi\left(  \Gamma\right)  \phi$ is a Parseval frame$.$
Then the Hilbert space $V_{\phi}\left(  \mathbf{H}_{\pi}\right)  $
is a sampling space, and $V_{\phi}\left(  \phi\right)  $ is the associated
\textbf{sinc-type} function for $V_{\phi}\left(  \mathbf{H}_{\pi}\right) .$\vskip 0.5 cm

Since non-commutative nilpotent Lie groups are very close to commutative Lie groups in their group structures, then it seems reasonable to conjecture that (\ref{construction}) extends to a large class of simply connected, connected non-commutative nilpotent Lie groups, and that this class of groups admits sampling susbpaces which resemble $\mathbf{H}_{\Omega}.$  Presently, we do not have a complete characterization of this class of nilpotent Lie groups.  However, we have some partial answers which allow us say that this class of nilpotent Lie groups is larger than the class considered in \cite {oussa} and \cite{oussa1}. The main purpose of this paper is to present the proof of this new result. 

We remark that reconstruction
theorems for the Heisenberg group (the simplest example of a connected, simply
connected non-commutative nilpotent Lie group) were obtained by F\"{u}hr
\cite{Fuhr cont}, and Currey and Mayeli in \cite{Currey}.  Other relevant sources are \cite{Mayeli, FuhrMayeli, pesenson}. In his work, F\"{u}hr
developed a natural concept of band-limitation on the space of square-integrable functions over the Heisenberg group.  Using the fact that the Plancherel measure of the Heisenberg group is supported on $\mathbb{R}^{\ast},$ he defined a band-limited Hilbert space over the Heisenberg group to be a space of square-integrable functions whose Plancherel transforms are supported on a fixed bounded subset of $\mathbb{R}^{\ast}.$ He was then able to provide characteristics of sampling spaces with respect to some integer lattices of the Heisenberg group. Furthermore, he provides an explicit Sinc type function in Theorem $6.18$ \cite{Fuhr cont}. For a larger class of step-two nilpotent Lie groups of the type $\mathbb{R}^{n-d}\rtimes \mathbb{R}^{d}$ which is properly contained in the class
of groups considered in this paper, we also obtained some sampling theorems in
\cite{oussa} and \cite{oussa1}.  Since we are dealing with some non-commutative groups, it is worth noticing the following. First, unlike the commutative case, the
corresponding Fourier transforms of the groups are operator-valued transforms.
Secondly, the left regular representations of the groups decompose into direct
integrals of infinite dimensional irreducible representations, each occurring
with infinite multiplicities. We will only be concerned with the multiplicity-free case in this paper. 
\subsection{Overview of the Paper}
Let $N$ be a simply connected, connected nilpotent Lie group. Let
$\mathfrak{n}$ be its Lie algebra satisfying the following.
\begin{condition}
\label{condition}
\end{condition}
\begin{description}
\item[1] $\mathfrak{n=z}\oplus\mathfrak{b}\oplus\mathfrak{a}$, $\mathfrak{z}$
is the center of $\mathfrak{n,}$%
\begin{align*}
\mathfrak{z}  &  =%
\mathbb{R}
Z_{n-2d}\oplus%
\mathbb{R}
Z_{n-2d-1}\oplus\cdots\oplus%
\mathbb{R}
Z_{1},\\
\mathfrak{b}  &  =%
\mathbb{R}
Y_{d}\oplus%
\mathbb{R}
Y_{d-1}\oplus\cdots\oplus%
\mathbb{R}
Y_{1},\\
\mathfrak{a}  &  =%
\mathbb{R}
X_{d}\oplus%
\mathbb{R}
X_{d-1}\oplus\cdots\oplus%
\mathbb{R}
X_{1}%
\end{align*}

\item[2] $\mathfrak{z}\oplus\mathfrak{b}$ is a maximal commutative ideal of
$\mathfrak{n}$

\item[3] $\left[  \mathfrak{a,b}\right]  \subseteq\mathfrak{z}$

\item[4] Given the square matrix of order $d$
\begin{equation}
S=\left[
\begin{array}
[c]{ccc}%
\left[  X_{1},Y_{1}\right]  & \cdots & \left[  X_{d},Y_{1}\right] \\
\vdots & \ddots & \vdots\\
\left[  X_{d},Y_{1}\right]  & \cdots & \left[  X_{d},Y_{d}\right]
\end{array}
\right]  \label{S}%
\end{equation}
the homogeneous polynomial $\det\left(  S\right)  $ is a non-trivial
polynomial defined over the ideal $\left[  \mathfrak{n,n}\right]
\subseteq\mathfrak{z}.$
\end{description}

Define the discrete set
\begin{equation}
\Gamma=\exp\left(  \mathbb{Z}Z_{n-2d}\right)  \cdots\exp\left(  \mathbb{Z}%
Z_{1}\right)  \exp\left(  \mathbb{Z}Y_{d}\right)  \cdots\exp\left(
\mathbb{Z}Y_{1}\right)  \exp\left(  \mathbb{Z}X_{d}\right)  \cdots\exp\left(
\mathbb{Z}X_{1}\right) \label{Gamma}%
\end{equation}
which is a subset of $N$ and define a matrix-valued function on $\mathfrak{z}^{\ast}$ as follows
\[
S(\lambda)=\left[
\begin{array}
[c]{ccc}%
\lambda\left[  X_{1},Y_{1}\right]   & \cdots & \lambda\left[  X_{1}%
,Y_{d}\right]  \\
\vdots &  & \vdots\\
\lambda\left[  X_{d},Y_{1}\right]   & \cdots & \lambda\left[  X_{d}%
,Y_{d}\right]
\end{array}
\right]  .
\]
In order to have a reconstruction formula, we will need a very specific definition of band-limitation. Indeed, we prove that there exists a fundamental domain $\mathbf{K}$ for $\mathbb{Z}%
^{n-2d}\cap\mathfrak{z}^{\ast}$ such that $\mathbf{I}=\mathbf{F}\cap\mathbf{K}$ is a set
of positive measure in $\mathfrak{z}^{\ast}$ and 
\[
\mathbf{F}=\left\{  \lambda\in\mathfrak{z}^{\ast}:\left\vert \det S\left(
\lambda\right)  \right\vert \leq1,\det S\left(  \lambda\right)  \neq0,\text{
and }\left\Vert S\left(  \lambda\right)  ^{Tr}\right\Vert_{\infty} <1\right\}.
\]
Let $\{\mathbf{u}_{\lambda} : \lambda \in \mathbf{I}\}$ be a measurable field of unit vectors in $L^2(\mathbb{R}^d).$ Let $\mathbf{H_{\mathbf{u},\mathbf{I}}}$ be a left-invariant subspace of $L^{2}\left(  N\right)  $
such that
\[
\mathbf{H_{\mathbf{u},\mathbf{I}}}=\left\{
\begin{array}
[c]{c}%
f\in L^{2}\left(  N\right)  :\widehat{f}(\lambda)=v_{\lambda}\otimes
\mathbf{u}_{\lambda}\text{ is a rank-one operator }\\
\text{in $L^{2}(\mathbb{R}^{d})\otimes L^{2}(\mathbb{R}^{d})$ and
\textrm{support of} }\widehat{f}\subseteq\mathbf{I}%
\end{array}
\right\}  .
\]
We have two main results. If $1_{d,d}$ stands for the identity matrix of order $d$, and $0_{d,d}$ is the zero matrix of order $d,$ then

\begin{theorem} \label{M1}The unitary dual of $N$ is up to a null set exhausted by the set of irreducible
representations $
\left\{  \pi_{\lambda}:\lambda\in\mathfrak{z}^{\ast}\text{ and }\det S(\lambda)\neq 0 \right\}$ where, for $f\in L^{2}\left(
\mathbb{R}
^{d}\right)  $, $\pi_{\lambda}\left(  \Gamma_{1}  \right) f$ is a Gabor system of the type $
\mathcal{G}\left(  f,B\left(  \lambda\right)  \mathbb{Z}^{2d}\right)
=\left\{  e^{2\pi i\left\langle k,x\right\rangle }f\left(  x-n\right)
:\left(  n,k\right)  \in B\left(  \lambda\right)  \mathbb{Z}^{2d}\right\},$ where

$$
\Gamma_1=\exp\left(  \mathbb{Z}Y_{d}\right)  \cdots\exp\left(
\mathbb{Z}Y_{1}\right)  \exp\left(  \mathbb{Z}X_{d}\right)  \cdots\exp\left(
\mathbb{Z}X_{1}\right),
$$

\begin{equation}
\label{0}B\left(  \lambda\right)  =\left[
\begin{array}
[c]{cc}%
1_{d,d} & 0_{d,d}\\
-X\left(  \lambda\right)  & -S\left(  \lambda\right)
\end{array}
\right]  ,\text{ }S\left(  \lambda\right)  =\left(  \lambda\left[  X_{i}%
,Y_{i}\right]  \right)  _{1\leq i,j\leq d},
\end{equation}
and $X\left(  \lambda\right)  $ is a strictly upper triangular matrix with
entries in the dual of the vector space $\left[  \mathfrak{a},\mathfrak{a}%
\right]  ,$ with $X\left(  \lambda\right)  _{i,j}=\lambda\left[  X_{i}%
,X_{j}\right]  $ for $i<j.$ \end{theorem}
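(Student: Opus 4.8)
The plan is to realize the generic irreducible representations of $N$ through Kirillov's orbit method and then to compute their action on $\Gamma_1$ by hand. First I would note that Condition 2 makes $\mathfrak{p}=\mathfrak{z}\oplus\mathfrak{b}$ an abelian ideal of $\mathfrak{n}$ of codimension $d$. Fixing $\lambda\in\mathfrak{z}^{\ast}$, extend it to a functional $\ell\in\mathfrak{n}^{\ast}$ and consider the alternating form $B_{\ell}(U,V)=\lambda([U,V])$. Since $\mathfrak{p}$ is abelian, $B_{\ell}$ vanishes on $\mathfrak{p}\times\mathfrak{p}$, so $\mathfrak{p}$ is isotropic. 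The radical of $B_{\ell}$ always contains $\mathfrak{z}$, and the block responsible for the pairing of $\mathfrak{a}$ with $\mathfrak{b}$ is exactly $S(\lambda)=(\lambda[X_i,Y_j])$; hence the hypothesis $\det S(\lambda)\neq0$ forces the radical to equal $\mathfrak{z}$, which makes $\dim\mathfrak{p}=n-d=\tfrac12(\dim\mathfrak{n}+\dim\mathfrak{z})$ the correct dimension of a polarization. Thus $\mathfrak{p}$ is a polarizing subalgebra subordinate to $\ell$, and $\pi_{\lambda}=\mathrm{Ind}_{P}^{N}\chi_{\ell}$, with $P=\exp\mathfrak{p}$ and $\chi_{\ell}(\exp U)=e^{2\pi i\,\ell(U)}$, is irreducible. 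Since $\det S$ is a non-trivial polynomial, its zero set is Plancherel-null, so away from it the family $\{\pi_{\lambda}\}$ exhausts $\widehat{N}$ up to a null set, giving the first assertion.

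Next I would write $\pi_{\lambda}$ concretely on $L^{2}(\mathbb{R}^{d})\cong L^{2}(N/P)$, using the generators $X_d,\dots,X_1$ as coordinates on a cross section for $N/P$. Using the Baker--Campbell--Hausdorff formula together with the bracket relations in Conditions 3 and 4, a direct computation should show: the central generators $\exp(Z_k)$ act by the scalar $e^{2\pi i\lambda(Z_k)}$; each $\exp(Y_j)$ acts by a modulation $f(x)\mapsto e^{2\pi i\langle c_j(\lambda),x\rangle}f(x)$, where $c_j(\lambda)$ is the $j$-th column of $S(\lambda)$; and each $\exp(X_i)$ acts by the translation $f(x)\mapsto f(x-e_i)$, up to a phase supplied by the brackets $[X_i,X_j]\in[\mathfrak{a},\mathfrak{a}]$. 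This realizes $\pi_{\lambda}(\Gamma_1)$ as a collection of time--frequency shifts on $L^{2}(\mathbb{R}^{d})$, i.e.\ a Gabor system.

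Finally I would assemble the lattice. Writing a general element of $\Gamma_1$ as $\exp(p\cdot Y)\exp(m\cdot X)$ with $m,p\in\mathbb{Z}^{d}$ and normalizing the product through BCH, I expect the resulting translation to be by $m$ and the resulting modulation frequency to be $-X(\lambda)m-S(\lambda)p$, where $-S(\lambda)p$ is the contribution of the brackets $[X_i,Y_j]$ and $-X(\lambda)m$ is the contribution of the brackets $[X_i,X_j]$, encoded in the strictly upper triangular matrix $X(\lambda)$ with entries $\lambda[X_i,X_j]$ for $i<j$. Reading these two vectors as the image of $(m,p)$ under a single linear map produces exactly the block matrix $B(\lambda)$ of \eqref{0}, so that $\pi_{\lambda}(\Gamma_1)f=\mathcal{G}(f,B(\lambda)\mathbb{Z}^{2d})$, as claimed.

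The main obstacle is the second and third steps taken together: carefully tracking the phase factors that arise when the translations coming from $\exp(\mathbb{Z}X_i)$ are commuted past one another and past the modulations coming from $\exp(\mathbb{Z}Y_j)$. Because $[\mathfrak{a},\mathfrak{a}]$ is not assumed trivial, these phases do not cancel as they do in the step-two case of \cite{oussa,oussa1}; instead they feed the off-diagonal block $-X(\lambda)$, and keeping the BCH bookkeeping consistent with the prescribed ordering of the generators in $\Gamma_1$ is the delicate part of the computation.
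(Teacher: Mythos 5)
Your plan follows the paper's own route essentially step for step: establish $\mathfrak{p}=\mathfrak{z}\oplus\mathfrak{b}$ as a polarization for generic $\lambda$ (the paper argues maximality by contradiction using $\det S(\lambda)\neq 0$, which is equivalent to your dimension count), realize $\pi_{\lambda}=\mathrm{Ind}_{\exp\mathfrak{p}}^{N}\chi_{\lambda}$ on $L^{2}(\mathbb{R}^{d})$ via the cross-section $\prod_{k}\exp(x_{k}X_{k})$, and then collect the coefficients of $x_{1},\dots,x_{d}$ in the resulting phase to read off the blocks $-S(\lambda)$ and $-X(\lambda)$ of $B(\lambda)$. The BCH bookkeeping you flag as the delicate point is exactly what the paper's Proposition on the irreducible representations carries out, so the proposal is correct and takes essentially the same approach.
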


\begin{theorem} \label{M} There exists a function $f\in \mathbf{H}_{\mathbf{u},\mathbf{I}}$ such that the Hilbert subspace $V_f(\mathbf{H}_{\mathbf{u},\mathbf{I}})$ of $L^2(N)$ is a sampling space with sinc-type function $V_f(f)$.\end{theorem}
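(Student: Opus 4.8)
The plan is to realize $\mathbf{H}_{\mathbf{u},\mathbf{I}}$ as the image of a wavelet transform and then quote Proposition $2.54$ in \cite{Fuhr cont}: it suffices to produce one $f\in\mathbf{H}_{\mathbf{u},\mathbf{I}}$ that is an admissible vector for the left regular representation $L$ restricted to $\mathbf{H}_{\mathbf{u},\mathbf{I}}$ and whose orbit $L(\Gamma)f$ is a Parseval frame for $\mathbf{H}_{\mathbf{u},\mathbf{I}}$; the sinc-type function is then automatically $V_f(f)$. I would take $f$ defined through the Plancherel transform by $\widehat{f}(\lambda)=\mathbf{1}_{[0,1)^d}\otimes\mathbf{u}_\lambda$ for $\lambda\in\mathbf{I}$ and $\widehat{f}(\lambda)=0$ otherwise. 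Since $|\det S(\lambda)|\leq 1$ on $\mathbf{F}\supseteq\mathbf{I}$ and $\mathbf{I}$ has finite measure, $\|f\|^2=\int_{\mathbf{I}}|\det S(\lambda)|\,d\lambda<\infty$, so $f\in\mathbf{H}_{\mathbf{u},\mathbf{I}}$; note the left window is the constant field $\mathbf{1}_{[0,1)^d}$, which trivializes all measurability concerns.

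For admissibility I would write $V_f\psi(x)=\langle\psi,L(x)f\rangle=(\psi*\widetilde{f})(x)$ with $\widetilde{f}(y)=\overline{f(y^{-1})}$, so that the convolution theorem gives $\widehat{V_f\psi}(\lambda)=\widehat{\psi}(\lambda)\,\widehat{f}(\lambda)^{*}$. For $\psi\in\mathbf{H}_{\mathbf{u},\mathbf{I}}$ with $\widehat{\psi}(\lambda)=w_\lambda\otimes\mathbf{u}_\lambda$, the rank-one product contracts the common right factor via $\langle\mathbf{u}_\lambda,\mathbf{u}_\lambda\rangle=1$, yielding $\widehat{V_f\psi}(\lambda)=w_\lambda\otimes\mathbf{1}_{[0,1)^d}$; then by the Plancherel formula $\|V_f\psi\|_{L^2(N)}^2=\int_{\mathbf{I}}\|w_\lambda\|^2\,\|\mathbf{1}_{[0,1)^d}\|^2\,|\det S(\lambda)|\,d\lambda=\|\psi\|^2$, since $\|\mathbf{1}_{[0,1)^d}\|=1$. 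Thus fixing the right tensor factor to a \emph{unit} field $\mathbf{u}_\lambda$ is exactly the multiplicity-one structure that makes $L|_{\mathbf{H}_{\mathbf{u},\mathbf{I}}}$ admissible, and $V_f$ is an isometry.

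Next I would establish the Parseval-frame property by splitting $\Gamma=\Gamma_Z\Gamma_1$ with $\Gamma_Z=\exp(\mathbb{Z}Z_{n-2d})\cdots\exp(\mathbb{Z}Z_1)$. As the $Z_i$ are central, $\pi_\lambda(\gamma_Z)$ is the scalar $e^{2\pi i\langle\lambda,\log\gamma_Z\rangle}$ while $\pi_\lambda(\gamma_1)$ acts only on the left factor, so $\langle\psi,L(\gamma_Z\gamma_1)f\rangle=\int_{\mathbf{I}}e^{-2\pi i\langle\lambda,\log\gamma_Z\rangle}\langle w_\lambda,\pi_\lambda(\gamma_1)\mathbf{1}_{[0,1)^d}\rangle\,|\det S(\lambda)|\,d\lambda$. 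Summing first over $\gamma_Z$ and using that $\{e^{-2\pi i\langle\lambda,\log\gamma_Z\rangle}\}$ is an orthonormal basis of $L^2(\mathbf{K})$ for the fundamental domain $\mathbf{K}\supseteq\mathbf{I}$ of $\mathbb{Z}^{n-2d}$, the Fourier-series Parseval identity collapses the $\gamma_Z$-sum to an integral over $\mathbf{I}$, and a Tonelli interchange gives
\[
\sum_{\gamma\in\Gamma}|\langle\psi,L(\gamma)f\rangle|^2=\int_{\mathbf{I}}\Big(\sum_{\gamma_1\in\Gamma_1}|\langle w_\lambda,\pi_\lambda(\gamma_1)\mathbf{1}_{[0,1)^d}\rangle|^2\Big)|\det S(\lambda)|^2\,d\lambda .
\]
By Theorem \ref{M1} the inner family $\pi_\lambda(\Gamma_1)\mathbf{1}_{[0,1)^d}$ is the Gabor system $\mathcal{G}(\mathbf{1}_{[0,1)^d},B(\lambda)\mathbb{Z}^{2d})$, so everything reduces to a fiberwise Gabor computation.

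The main obstacle is precisely this fiberwise step: showing that for a.e.\ $\lambda\in\mathbf{I}$ the system $\mathcal{G}(\mathbf{1}_{[0,1)^d},B(\lambda)\mathbb{Z}^{2d})$ is tight with frame constant $|\det S(\lambda)|^{-1}$. The block form of $B(\lambda)$ forces the translation parameter to run over $\mathbb{Z}^d$, so the translates $\mathbf{1}_{[0,1)^d}(\cdot-m_1)$ tile $\mathbb{R}^d$; on each tile the modulations run over $-X(\lambda)m_1-S(\lambda)\mathbb{Z}^d$, and the shear $X(\lambda)$ merely multiplies the restricted function by a unimodular factor and drops out of the $\ell^2$-sum. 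What remains is that $\{e^{2\pi i\langle S(\lambda)m_2,\cdot\rangle}\}_{m_2\in\mathbb{Z}^d}$ is tight on $L^2([0,1)^d)$ with constant $|\det S(\lambda)|^{-1}$, which holds as soon as $[0,1)^d$ packs under the period lattice $S(\lambda)^{-Tr}\mathbb{Z}^d$; the hypothesis $\|S(\lambda)^{Tr}\|_\infty<1$ delivers exactly this, since for $p\in\mathbb{Z}^d\setminus\{0\}$ one must have $\|S(\lambda)^{-Tr}p\|_\infty\geq 1$, as otherwise $\|p\|_\infty\leq\|S(\lambda)^{Tr}\|_\infty\,\|S(\lambda)^{-Tr}p\|_\infty<1$ would force $p=0$. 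Inserting the resulting value $\|w_\lambda\|^2/|\det S(\lambda)|$ into the display returns $\sum_{\gamma}|\langle\psi,L(\gamma)f\rangle|^2=\int_{\mathbf{I}}\|w_\lambda\|^2|\det S(\lambda)|\,d\lambda=\|\psi\|^2$. With admissibility and the Parseval-frame property both in hand, Proposition $2.54$ of \cite{Fuhr cont} yields that $V_f(\mathbf{H}_{\mathbf{u},\mathbf{I}})$ is a sampling space with sinc-type function $V_f(f)$. The remaining points (the precise form of the operator-valued Plancherel and convolution identities, the Tonelli interchange, and the normalization of $\mathbf{K}$ making the central characters an orthonormal basis) are routine.
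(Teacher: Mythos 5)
Your proposal is correct and follows the same overall architecture as the paper's proof: produce $f$ with $\widehat{f}(\lambda)$ a rank-one operator of Hilbert--Schmidt norm one so that $V_f$ is an isometry, reduce the Parseval-frame condition for $L(\Gamma)f$ to a fiberwise Gabor condition by summing the central characters over the fundamental domain $\mathbf{K}\supseteq\mathbf{I}$, verify that fiberwise condition via the packing property of the unit cube under $S(\lambda)^{-Tr}\mathbb{Z}^{d}$ guaranteed by $\left\Vert S(\lambda)^{Tr}\right\Vert_{\infty}<1$, and invoke Proposition 2.54 of \cite{Fuhr cont}. The one place you genuinely diverge is the treatment of the shear term $X(\lambda)$ coming from $[\mathfrak{a},\mathfrak{a}]$: the paper chooses the chirped window $\mathcal{U}\chi_{[-1/2,1/2]^{d}}$ with $\mathcal{U}f(t)=e^{-2\pi i\left\langle t,X(\lambda)t\right\rangle}f(t)$ and proves in Lemma \ref{pf} that conjugation by $\mathcal{U}$ carries the sheared system $\mathcal{G}(v,B(\lambda)\mathbb{Z}^{2d})$ to the separable system $\mathcal{G}(\mathcal{U}^{-1}v,\mathbb{Z}^{d}\times S(\lambda)\mathbb{Z}^{d})$, whose tightness is quoted from \cite{Pfander}; you instead keep the unchirped window $\chi_{[0,1)^{d}}$ and observe that, because its $\mathbb{Z}^{d}$-translates are disjointly supported, the modulation $e^{-2\pi i\left\langle X(\lambda)m_{1},\cdot\right\rangle}$ is a fixed unimodular factor on each tile and disappears from the modulus, reducing everything to the exponential system for $S(\lambda)\mathbb{Z}^{d}$ on a single tile. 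Both routes are valid; yours is more self-contained in that it makes the content of the cited Gabor result explicit, but it depends on the window being an indicator of a fundamental domain for the translation lattice, whereas the paper's conjugation trick works for an arbitrary window. The only loose ends in your write-up are cosmetic: the packing statement should be applied on each translate $m_{1}+[0,1)^{d}$ rather than on $[0,1)^{d}$ alone (harmless, since translating a packing set preserves packing), and the frame constant $\left\vert\det S(\lambda)\right\vert^{-1}$ you extract is exactly what cancels the factor $\left\vert\det S(\lambda)\right\vert^{2}$ produced by the Plancherel weight after the central Parseval identity, reproducing $\left\Vert\psi\right\Vert^{2}$ as in the paper's Proposition \ref{pro}.
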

The paper is organized around our two main results. Theorem \ref{M1} is proved in the second section and Theorem \ref{M} is proved in the last section of the paper. Also, several interesting examples are given throughout the paper, to help the reader follow the stream of ideas presented. 
\section{The Unitary Dual of $N$ and the Proof of Theorem \ref{M1}}
Let us start by setting up some notation. In this paper, all representations
are strongly continuous and unitary. All sets are measurable. The characteristic
function of a set $E$ is written as $\chi_{E},$ and $L$ stands for the left
regular representation of a given locally compact group. If
$\mathfrak{a,b}$ are vector subspaces of some Lie algebra $\mathfrak{g}$, we
denote by $\left[  \mathfrak{a,b}\right]  $ the set of linear combinations of
the form $\left[  X,Y\right]  $ where $X\in\mathfrak{a}$ and $Y\in
\mathfrak{b}$. The linear dual of a finite-dimensional vector space $V$ is
denoted by $V^{\ast},$ and given two equivalent representations $\pi_{1}$ and $\pi_{2},$ we write $\pi_{1}\cong\pi_{2}.$ Given a matrix $M,$ the transpose of $M$ is written as $M^{Tr}.$
\subsection{The Unitary Dual of $N$}
Let $\mathfrak{n}$ be a nilpotent Lie algebra of dimension $n$ over $\mathbb{R}$
with corresponding Lie group $N=\exp\mathfrak{n}$. We assume that $N$ is
simply connected and connected. Let $\mathfrak{s}$ be a subset in $\mathfrak{n}$ and let $\lambda$ be a linear functional in $\mathfrak{n}^{\ast}$. We define the
corresponding sets $\mathfrak{s}^{\lambda}$ and $\mathfrak{s}\left(
\lambda\right)  $ such that $$
\mathfrak{s}^{\lambda}=\left\{  Z\in\mathfrak{n:}\text{ }\lambda\left[
Z,X\right]  =0\text{ for every }X\in\mathfrak{s}\right\}$$ and $\mathfrak{s}\left(  \lambda\right)  =\mathfrak{s}^{\lambda}%
\cap\mathfrak{s}.$ The ideal $\mathfrak{z}$ denotes the center of the Lie
algebra of $\mathfrak{n}$ and the coadjoint action on the dual of
$\mathfrak{n}$ is simply the dual of the adjoint action of $N$ on
$\mathfrak{n}$. Given $X\in\mathfrak{n}, \lambda \in \mathfrak{n}^{\ast}$, the coadjoint action is defined
multiplicatively as follows: $\exp X\cdot\lambda\left(  Y\right)
=\lambda\left(  Ad_{\exp-X}Y\right)  $. The following discussion describes
some stratification procedure of the dual of the Lie algebra $\mathfrak{n}$
which will be used to develop a precise Plancherel theory for $N.$ This theory
is also well exposed in \cite{Corwin}.
Let $\mathcal{B}=\left\{X_1,\cdots,X_n\right\}$ be a basis for $\mathfrak{n}.$ Let $$ \mathfrak{n}_1\subseteq  \mathfrak{n}_2\subseteq \cdots \subseteq   \mathfrak{n}_{n-1}\subseteq  \mathfrak{n}$$ be a sequence of subalgebras of $\mathfrak{n}.$ We recall that $\mathcal{B}$ is called a \textbf{strong Malcev basis} through $\mathfrak{n}_1, \mathfrak{n}_2, \cdots,   \mathfrak{n}_{n-1}, \mathfrak{n}$ if and only if the following holds.
\begin{enumerate}
\item The real span of $\left\{X_1,\cdots,X_k\right\}$ is equal to $\mathfrak{n}_k.$
\item Each $\mathfrak{n}_k$ is an ideal of $\mathfrak{n}.$
\end{enumerate}

We start by fixing a strong Malcev basis $\left\{  Z_{i}\right\}
_{i=1}^{n}$ for $\mathfrak{n}$ and we define an increasing sequence of ideals:
$\mathfrak{n}_{k}=\mathbb{R}$-$\mathrm{span}\left\{  Z_{i}\right\}  _{i=1}%
^{k}.$ Given any linear functional $\lambda\in\mathfrak{n}^{\ast},$ we
construct the following skew-symmetric matrix:
\begin{equation}
M\left(  \lambda\right)  =\left[  \lambda\left[  Z_{i},Z_{j}\right]  \right]
_{1\leq i,j,n}. \label{Ml}%
\end{equation}
It is easy to see that $\mathfrak{n}\left(  \lambda\right)
=\mathrm{nullspace}\left(  M\left(  \lambda\right)  \right)  .$ It is also
well-known that all coadjoint
orbits have a natural  symplectic smooth structure, and therefore are even-dimensional manifolds. Also, for each $\lambda\in$ $\mathfrak{n}%
^{\ast}$ there is a corresponding set $\mathbf{e}\left(  \lambda\right)
\subset\left\{  1,2,\cdots,n\right\}  $ of \textbf{jump indices} defined by
\[
\mathbf{e}\left(  \lambda\right)  =\left\{  1\leq j\leq n:\mathfrak{n}%
_{k}\text{ }\nsubseteq\text{ }\mathfrak{n}_{k-1}+\mathfrak{n}\left(
\lambda\right)  \right\}  .
\]
Naively speaking, the set $\mathbf{e}\left(  \lambda\right)  $ collects all
basis elements
\[
\left\{  B_{1},\cdots,B_{2d}\right\}  \subset\left\{  Z_{1},Z_{2}%
,\cdots,Z_{n-1},Z_{n}\right\}
\]
in the Lie algebra $\mathfrak{n}$ such that
$
\exp\left(
\mathbb{R}
B_{1}\right)  \cdots\exp\left(
\mathbb{R}
B_{2d}\right)  \cdot\lambda=G\cdot\lambda.
$
For each subset $\mathbf{e}$ $\subseteq\left\{  1,2,\cdots,n\right\}  ,$ the
set $\Omega_{\mathbf{e}}=\left\{  \lambda\in\mathfrak{n}^{\ast}:\mathbf{e}%
\left(  \lambda\right)  =\mathbf{e}\right\}  $ is algebraic and $N$-invariant.
Moreover, letting $
\xi=\left\{  \mathbf{e}\subseteq\left\{  1,2,\cdots,n\right\}  :\Omega
_{\mathbf{e}}\neq\emptyset\right\}$ then
\[
\mathfrak{n}^{\ast}=%
{\displaystyle\bigcup\limits_{\mathbf{e}\in\xi}}
\Omega_{\mathbf{e}}.
\]
The union of all \textbf{non-empty layers} defines a `stratification' of
$\mathfrak{n}^{\ast}$. It is known that there is a total ordering $\prec$ on the stratification for
which the minimal element is Zariski open and consists of orbits of maximal
dimension. Let $\mathbf{e}$ be a subset of $\left\{  1,2,\cdots,n\right\}  $
and define $M_{\mathbf{e}}\left(  \lambda\right)  =\left[  \lambda\left[
Z_{i},Z_{j}\right]  \right]  _{i,j\in\mathbf{e}}$ . The set $\Omega
_{\mathbf{e}}$ is also given as follows:
\begin{equation}
\Omega_{\mathbf{e}}=\left\{  \lambda\in\mathfrak{n}^{\ast}:\det M_{\mathbf{e}%
^{\prime}}\left(  \lambda\right)  =0\text{ for all }\mathbf{e}^{\prime}%
\prec\mathbf{e}\text{ and }\det M_{\mathbf{e}}\left(  \lambda\right)
\neq0\text{ }\right\}  . \label{omega}%
\end{equation}

Let us now fix an open and dense layer $\Omega=\Omega_{\mathbf{e}}%
\subset\mathfrak{n}^{\ast}.$ The following is standard. We define a
polarization subalgebra associated with the linear functional $\lambda$ by
$\mathfrak{p}(\lambda).$ $\mathfrak{p}(\lambda)$ is a maximal subalgebra
subordinated to $\lambda$ such that $\lambda\left(  \lbrack\mathfrak{p}%
(\lambda),\mathfrak{p}(\lambda)]\right)  =0$ and $\chi(\exp X)=e^{2\pi
i\lambda(X)}$ defines a character on $\exp(\mathfrak{p}(\lambda))$. It is
well-known that $\dim\left(  {\mathfrak{n}}(\lambda)\right)  =n-2d$ and
$\dim\left(  \mathfrak{n}/\mathfrak{p}(\lambda)\right)  =$ $d.$

According to the orbit method \cite{Corwin}, all irreducible representations
of $N$ are parametrized by the a set of coadjoint orbits. In order to describe
the unitary dual of $N$ and its Plancherel measure, we need to construct a
smooth cross-section $\Sigma$ which is homeomorphic to $\Omega/N$. Using
standard techniques described in \cite{Corwin}, we obtain $$
\Sigma=\left\{  \lambda\in\Omega:\lambda\left(  Z_{k}\right)  =0\text{ for all
}k\in\mathbf{e}\text{ }\right\} .$$ Defining for each linear functional $\lambda$ in the generic layer, a
character of $\exp\left(  \mathfrak{p}(\lambda)\right)  $ such that
$\chi_{\lambda}\left(  \exp X\right)  =e^{2\pi i\lambda\left(  X\right)  }$,
we realize almost all unitary irreducible representations of $N$ by induction
as follows.
\[
\pi_{\lambda}=\mathrm{Ind}_{\exp\left(  \mathfrak{p}(\lambda)\right)  }%
^{N}\left(  \chi_{\lambda}\right)
\]
and $\pi_{\lambda}$ acts in the Hilbert completion of the space
\begin{equation}
\mathbf{H}_{\lambda}=\left\{
\begin{array}
[c]{c}%
f:N\xrightarrow{\hspace*{1cm}}\mathbb{C} : f\left(  xy\right)  =\chi_{\lambda
}\left(  y\right)  ^{-1}f\left(  x\right)  \text{ for }y\in\exp\mathfrak{p}%
(\lambda)\mathfrak{,}\text{ }\\
\text{and }\int_{\frac{N}{\exp\left(  \mathfrak{p}(\lambda)\right)  }%
}\left\vert f\left(  x\right)  \right\vert ^{2}d\overline{x}<\infty
\end{array}
\right\}  \label{Hilbert}%
\end{equation}
endowed with the following inner product: $$
\left\langle f,f^{\prime}\right\rangle =\int_{\frac{N}{\exp\left(
\mathfrak{p}(\lambda)\right)  }}f\left(  n\right)  \overline{f^{\prime}\left(
n\right)  }d\overline{n}.$$ In fact, there is an obvious identification between the completion of
$\mathbf{H}_{\lambda}$ and the Hilbert space $L^{2}\left(  \frac{N}%
{\exp\left(  \mathfrak{p}(\lambda)\right)  }\right)  .$ We will come back to
this later.
\newline
We will now focus on the class of nilpotent Lie groups that we are concerned
with in this paper. 

\begin{example}
Let $N$ be a nilpotent Lie group with Lie algebra spanned by $Z_{1}%
,Z_{2},Y_{1},Y_{2},X_{1},X_{2}$ such that
\begin{align*}
\left[  X_{1},X_{2}\right]    & =Z_{1},\left[  X_{1},Y_{1}\right]  =Z_{1}\\
\left[  X_{2},Y_{2}\right]    & =Z_{1},\left[  X_{1},Y_{2}\right]  =Z_{2}\\
\left[  X_{2},Y_{1}\right]    & =Z_{2}.
\end{align*}
Then $\det\left(  S\right)  =Z_{1}^{2}-Z_{2}^{2}$ and it is clear that $N$
belongs to the class of groups considered here. 
\end{example}

\begin{example}
Let $N$ be a nilpotent Lie group with Lie algebra spanned by the vectors
\[
\left\{  Z_{3},Z_{2},Z_{1},Y_{3},Y_{2},Y_{1},X_{3},X_{2},X_{1}\right\}
\]
and the following non-trivial Lie brackets
\begin{align*}
\left[  X_{1},X_{2}\right]   &  =Z_{1},\left[  X_{1},X_{3}\right]
=Z_{2},\left[  X_{2},X_{3}\right]  =Z_{3}\\
\left[  X_{1},Y_{1}\right]   &  =Z_{1},\left[  X_{1},Y_{2}\right]
=Z_{2}-Z_{3},\left[  X_{1},Y_{3}\right]  =Z_{1}+Z_{2}\\
\left[  X_{2},Y_{1}\right]   &  =Z_{2},\left[  X_{2},Y_{2}\right]
=Z_{1}-Z_{2},\left[  X_{2},Y_{3}\right]  =Z_{2}-Z_{3}\\
\left[  X_{3},Y_{1}\right]   &  =Z_{3},\left[  X_{3},Y_{2}\right]
=Z_{1}+Z_{2},\left[  X_{3},Y_{3}\right]  =Z_{3}.
\end{align*}
Then $\mathfrak{a}$ does not commute, $\mathfrak{z}\oplus\mathfrak{b}$ is a
maximal commutative ideal of $\mathfrak{n,}$ $\left[  \mathfrak{a,b}\right]
\subseteq\mathfrak{z,}$
\[
S=\left[
\begin{array}
[c]{ccc}%
Z_{1} & Z_{2}-Z_{3} & Z_{1}+Z_{2}\\
Z_{2} & Z_{1}-Z_{2} & Z_{2}-Z_{3}\\
Z_{3} & Z_{1}+Z_{2} & Z_{3}%
\end{array}
\right]\]  and $$\det\left(  S\right)  =Z_{1}^{2}Z_{3}+Z_{1}Z_{2}%
^{2}+Z_{2}^{3}+Z_{2}^{2}Z_{3}-Z_{2}Z_{3}^{2}+Z_{3}^{3}\neq0.$$

\end{example}

Let us define
\begin{align*}
B_{1}  &  =Z_{n-2d},B_{2}=Z_{n-2d-1}\cdots,B_{n-2d}=Z_{1},B_{n-2d+1}%
=Y_{d},\text{ }\\
B_{n-2d+2}  &  =Y_{d-1}\cdots,B_{n-d}=Y_{1},\text{ }B_{n-d+1}=X_{d}%
,B_{n-d+2}=X_{d-1},\cdots,\text{ and }B_{n}=X_{1}.
\end{align*}

\begin{lemma}
Let $\lambda\in\mathfrak{z}^{\ast}.$ If $\det\left(  S\right)  $ is a
non-vanishing polynomial then $\mathfrak{n}\left(  \lambda\right)
=\mathfrak{z}$ for a.e. $\lambda\in\mathfrak{z}^{\ast}.$
\end{lemma}

\begin{proof}
First, let
\[
S\left(  \lambda\right)  =\left[
\begin{array}
[c]{ccc}%
\lambda\left[  X_{1},Y_{1}\right]  & \cdots & \lambda\left[  X_{1}%
,Y_{d}\right] \\
\vdots & \ddots & \vdots\\
\lambda\left[  X_{d},Y_{1}\right]  & \cdots & \lambda\left[  X_{d}%
,Y_{d}\right]
\end{array}
\right]  .
\]
We recall the definition of $S$ from (\ref{S}). Clearly for a.e. $\lambda
\in\mathfrak{z}^{\ast},$ $S$ and $S\left(  \lambda\right)  $ have the same
rank, which is equal to $d$ on a dense open subset of the linear dual of the
central ideal of the Lie algebra. In fact, we call $d$ the generic rank of the
matrix $S(\lambda).$ Also, we recall that $\mathfrak{n}\left(  \lambda\right)
$ is the null-space of $M\left(  \lambda\right)  $ which is defined in
(\ref{Ml}) as follows
\[
M\left(  \lambda\right)  =\left[  \lambda\left[  B_{i},B_{j}\right]  \right]
_{1\leq i,j\leq n}=\left[
\begin{array}
[c]{ccc}%
0_{n-2d,n-2d} & 0_{n-2d,d} & 0_{n-2d,d}\\
0_{d,n-2d} & 0_{d,d} & S^{\prime}\left(  \lambda\right) \\
0_{d,n-2d} & -S^{\prime}\left(  \lambda\right)  & R\left(  \lambda\right)
\end{array}
\right]  ;
\]
$0_{p,q}$ stands for the $p\times q$ zero matrix,
\[
S^{\prime}\left(  \lambda\right)  =\left[
\begin{array}
[c]{ccc}%
\lambda\left[  Y_{d},X_{d}\right]  & \cdots & \lambda\left[  Y_{d}%
,X_{1}\right] \\
\vdots & \ddots & \vdots\\
\lambda\left[  Y_{1},X_{d}\right]  & \cdots & \lambda\left[  Y_{1}%
,X_{1}\right]
\end{array}
\right]  ,\]
and 
\[R\left(  \lambda\right)  =\left[
\begin{array}
[c]{ccc}%
\lambda\left[  X_{d},X_{d}\right]  & \cdots & \lambda\left[  X_{d}%
,X_{1}\right] \\
\vdots & \ddots & \vdots\\
\lambda\left[  X_{1},X_{d}\right]  & \cdots & \lambda\left[  X_{1}%
,X_{1}\right]
\end{array}
\right]
\]
Since the first $n-2d$ columns of the matrix $M\left(  \lambda\right)  $ are
zero vectors, and since the remaining $2d$ columns are linearly independent
then the nullspace of $M\left(  \lambda\right)  $ is equal to the center of
the algebra $\mathfrak{n}$ which is a vector space spanned by $n-2d$ vectors.
\end{proof}

Fix $
\mathbf{e=}\left\{  n-2d+1,n-2d+2,\cdots,n\right\} .$ It is not too hard to see that the corresponding layer $
\Omega=\Omega_{\mathbf{e}}=\left\{  \lambda\in\mathfrak{n}^{\ast}:\det\left(
S\left(  \lambda\right)  \right)  \neq0\right\}$ is a Zariski open and dense set in $\mathfrak{n}^{\ast}.$ Next, the manifold \begin{equation}\label{Sigma}
\Sigma=\left\{  \lambda\in\Omega:\lambda\left(  \mathfrak{b\oplus a}\right)
=0\right\}\end{equation} gives us an almost complete parametrization of the unitary dual of $N$ since
it is a cross-section for the coadjoint orbits in the layer $\Omega$.
Moreover, we observe that $\Sigma$ is homeomorphic with a Zariski open subset
of $\mathfrak{z}^{\ast}.$ In order to obtain a realization of the irreducible
representation corresponding to each linear functional in $\Sigma,$ we will
need to construct a corresponding polarization subalgebra \cite{Corwin}.

The following lemma is in fact the first step toward a precise computation of
the unitary dual of $N.$ Put $
\mathfrak{p}=\mathfrak{z}\oplus\mathfrak{b}.$

\begin{lemma}
For every $\lambda\in\Sigma,$ a corresponding polarization subalgebra is given
by the ideal $\mathfrak{p}$
\end{lemma}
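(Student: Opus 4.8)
The plan is to verify that the ideal $\mathfrak{p} = \mathfrak{z} \oplus \mathfrak{b}$ satisfies the two defining properties of a polarization subalgebra subordinate to $\lambda$: that it is a subalgebra with $\lambda([\mathfrak{p},\mathfrak{p}]) = 0$, and that it has the correct (maximal) dimension, namely $\dim \mathfrak{p} = \tfrac{1}{2}(\dim \mathfrak{n} + \dim \mathfrak{n}(\lambda))$. First I would recall from Condition \ref{condition} that $\mathfrak{z} \oplus \mathfrak{b}$ is a maximal commutative ideal of $\mathfrak{n}$. Being commutative means $[\mathfrak{p},\mathfrak{p}] = 0$, so trivially $\lambda([\mathfrak{p},\mathfrak{p}]) = \lambda(0) = 0$; this makes $\mathfrak{p}$ subordinate to $\lambda$ for free, and in fact for \emph{every} $\lambda \in \mathfrak{n}^\ast$, not just those in $\Sigma$. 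That property of being an ideal also guarantees $\mathfrak{p}$ is a subalgebra, so the only real content is the dimension count.

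For the dimension condition I would invoke the standard orbit-method fact already quoted in the excerpt: for $\lambda$ in the generic layer $\Omega$ one has $\dim(\mathfrak{n}(\lambda)) = n - 2d$ and $\dim(\mathfrak{n}/\mathfrak{p}(\lambda)) = d$, so a polarization has dimension $n - d$. Now I compute $\dim \mathfrak{p} = \dim \mathfrak{z} + \dim \mathfrak{b} = (n - 2d) + d = n - d$, which matches exactly. Combined with subordination, the maximality of dimension forces $\mathfrak{p}$ to be a genuine polarization: a subordinate subalgebra of dimension $\tfrac{1}{2}(\dim\mathfrak{n} + \dim\mathfrak{n}(\lambda)) = \tfrac{1}{2}((n) + (n-2d)) = n - d$ is automatically a polarizing (maximal isotropic) subalgebra for the skew form $B_\lambda(U,V) = \lambda([U,V])$.

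The one point that genuinely requires the hypotheses of the lemma (i.e.\ that $\lambda \in \Sigma$ rather than arbitrary $\lambda$) is ensuring that $\dim\mathfrak{n}(\lambda) = n - 2d$, since the formula $\dim\mathfrak{p}(\lambda) = \tfrac{1}{2}(n + \dim\mathfrak{n}(\lambda))$ only gives $n-d$ when the radical has the generic dimension $n-2d$. This is exactly the conclusion of the preceding lemma, which shows $\mathfrak{n}(\lambda) = \mathfrak{z}$ for $\lambda$ with $\det S(\lambda) \neq 0$, and every $\lambda \in \Sigma \subseteq \Omega$ satisfies $\det S(\lambda) \neq 0$ by the definition of $\Omega$. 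So I would close the argument by citing that lemma to pin down $\dim\mathfrak{n}(\lambda) = \dim\mathfrak{z} = n - 2d$, and then read off that the isotropic subalgebra $\mathfrak{p}$ of dimension $n-d$ is maximal isotropic, hence a polarization.

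I expect no serious obstacle here: the heavy lifting was done in establishing that $\mathfrak{z}\oplus\mathfrak{b}$ is a \emph{maximal commutative} ideal of the correct dimension, which was built into Condition \ref{condition}, and in the previous lemma that fixes $\mathfrak{n}(\lambda)$. The subtlety worth stating carefully is why subordination plus the correct dimension suffices to conclude that $\mathfrak{p}$ is polarizing; this is the elementary symplectic-linear-algebra observation that a subordinate subalgebra whose dimension equals $\tfrac{1}{2}(\dim\mathfrak{n} + \dim\mathfrak{n}(\lambda))$ is automatically a maximal totally isotropic subspace of $\mathfrak{n}/\mathfrak{n}(\lambda)$ with respect to $B_\lambda$, which is precisely the definition of a polarization.
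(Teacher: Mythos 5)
Your argument is correct, but it proceeds by a genuinely different route than the paper. The paper's proof never counts dimensions: it argues maximality directly by contradiction, observing that any subordinate subalgebra strictly containing $\mathfrak{p}=\mathfrak{z}\oplus\mathfrak{b}$ would have to adjoin a nonzero $A\in\mathfrak{a}$, and that $\lambda[A,Y_k]=0$ for all $k$ would force a nontrivial linear relation among the columns of $S(\lambda)$, contradicting Condition \ref{condition} (the paper phrases the contradiction via the non-vanishing of $\det S$ as a polynomial). Your proof instead verifies the dimension identity $\dim\mathfrak{p}=(n-2d)+d=n-d=\tfrac{1}{2}\left(\dim\mathfrak{n}+\dim\mathfrak{n}(\lambda)\right)$ and invokes the standard symplectic fact that an isotropic subalgebra of maximal isotropic dimension is a polarization. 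What your route buys is that it directly establishes the maximal-isotropic property (the one actually needed for irreducibility of the induced representation), whereas the paper's stated definition of $\mathfrak{p}(\lambda)$ and its contradiction hypothesis (which asks for $[\mathfrak{p}\oplus\mathbb{R}A,\mathfrak{p}\oplus\mathbb{R}A]=\{0\}$ rather than merely $\lambda[\cdot,\cdot]=0$) are somewhat looser; what the paper's route buys is self-containedness, using only the structure constants and no appeal to the general theory. One small point you should make explicit: the preceding lemma is stated only for almost every $\lambda\in\mathfrak{z}^{\ast}$, so you should note that its proof actually yields $\mathfrak{n}(\lambda)=\mathfrak{z}$ for \emph{every} $\lambda$ with $\det S(\lambda)\neq 0$, hence for every $\lambda\in\Sigma$, which is what your dimension count requires.
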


\begin{proof}
Since $\mathfrak{p}$ is a commutative algebra then clearly $\lambda\left[
\mathfrak{p,p}\right]  =\left\{  0\right\}  .$ In order to prove the lemma, it
suffices to show that $\mathfrak{p}$ is a maximal algebra such that
$\lambda\left[  \mathfrak{p,p}\right]  =\left\{  0\right\}  $. Let us suppose
by contradiction that it is not. There exists a non-zero vector $A\in\mathfrak{a}$ such that
$
\mathfrak{p}\subsetneq\mathfrak{p\oplus\mathbb{R}}A
$
and $\left[  \mathfrak{p\oplus%
\mathbb{R}
}A,\mathfrak{p\oplus%
\mathbb{R}
}A\right]  $ is a zero vector space. However, $
\left[  \mathfrak{p\oplus%
\mathbb{R}
}A,\mathfrak{p\oplus%
\mathbb{R}
}A\right]  =\left[  \mathfrak{b\oplus%
\mathbb{R}
}A,\mathfrak{b\oplus%
\mathbb{R}
}A\right]  =\left\{  0\right\}.$ So there exists an element of $\mathfrak{a}$ which commutes with all the vectors
$Y_{k},1\leq k\leq d.$ This contradicts the fourth assumption in Condition \ref{condition}.
\end{proof}

We recall the definition of the discrete set $\Gamma$ given in (\ref{Gamma})
and we define the discrete set
\[
\Gamma_{1}=\exp\mathbb{Z}Y_{d}\cdots\exp\mathbb{Z}Y_{1}\exp\mathbb{Z}%
X_{d}\cdots\exp\mathbb{Z}X_{1}\subset N.
\]
We observe that $\Gamma_{1}$ is not a group but is naturally identified with
the set $\mathbb{Z}^{2d}.$ 

Next, since $N$ is a non-commutative group, the following remark is in order.
Let $A$ be a set, and $\mathrm{Sym}\left(  A\right)  $ be the group of
permutation maps of $A.$

\begin{remark}
Let $\sigma\in\mathrm{Sym}\left(  \left\{  1,\cdots,d\right\}  \right)  $ be a
permutation map. Since $\mathfrak{a}$ is not commutative, it is clear that in
general
\[
\Gamma\neq\exp\left(  \mathbb{Z}Z_{n-2d}\right)  \cdots\exp\left(
\mathbb{Z}Z_{1}\right)  \exp\left(  \mathbb{Z}Y_{d}\right)  \cdots\exp\left(
\mathbb{Z}Y_{1}\right)  \exp\left(  \mathbb{Z}X_{\sigma\left(  d\right)
}\right)  \cdots\exp\left(  \mathbb{Z}X_{\sigma\left(  1\right)  }\right)
\]
However, for arbitrary permutation maps $\sigma_{1},\sigma_{2}$ such that
$\sigma_{1}\in\mathrm{Sym}\left(  \left\{  1,\cdots,n-2d\right\}  \right)  $
and $\sigma_{2}\in\mathrm{Sym}\left(  \left\{  1,\cdots,d\right\}  \right)  $,
the following holds true:
\[
\Gamma=\exp\left(  \mathbb{Z}Z_{\sigma_{1}\left(  n-2d\right)  }\right)
\cdots\exp\left(  \mathbb{Z}Z_{\sigma_{1}\left(  1\right)  }\right)
\exp\left(  \mathbb{Z}Y_{\sigma_{2}\left(  d\right)  }\right)  \cdots
\exp\left(  \mathbb{Z}Y_{\sigma_{2}\left(  1\right)  }\right)  \exp\left(
\mathbb{Z}X_{d}\right)  \cdots\exp\left(  \mathbb{Z}X_{1}\right)  .
\]

\end{remark}

Now, let $x=\left(  x_{1},x_{2},\cdots,x_{d}\right)  \in%
\mathbb{R}
^{d}.$

\begin{proposition}
\label{irreducible} The unitary dual of $N$ is given by
\[
\left\{  \pi_{\lambda}=\mathrm{Ind}_{\exp\left(  \mathfrak{z}\oplus
\mathfrak{b}\right)  }^{N}\left(  \chi_{\lambda}\right)  :\lambda=\left(
\lambda_{1},\lambda_{2},\cdots,\lambda_{n-2d},0,\cdots,0\right)  ,\text{ and
}\det\left(  S\left(  \lambda\right)  \right)  \neq0\right\}
\]
which we realize as acting in $L^{2}\left(
\mathbb{R}
^{d}\right)  $ as follows.%
\begin{align*}
&  \pi_{\lambda}\left(  \exp\left(  z_{n-2d}Z_{n-2d}\right)  \cdots\exp\left(
z_{1}Z_{1}\right)  \exp\left(  l_{d}Y_{d}\right)  \cdots\exp\left(  l_{1}%
Y_{1}\right)  \exp\left(  m_{d}X_{d}\right)  \cdots\exp\left(  m_{1}%
X_{1}\right)  \right)  \phi\left(  x\right) \\
&  =e^{ 2\pi i \sum_{j=1}^{d}s_{j}\lambda Z_{j} }
e^{  -2\pi i\sum_{j=1}^{d}\sum_{k=1}^{d}x_{k}l_{j}%
\lambda\left[  X_{k},Y_{j}\right]} e^{   -2\pi i\lambda\left(  \sum_{j=2}^{d}\sum_{r=1}^{j-1}m_{j}%
x_{r}\left[  X_{r},X_{j}\right]  \right)}  f\left(  x_{1}-m_{1}%
,\cdots,x_{d}-m_{d}\right)  .
\end{align*}

\end{proposition}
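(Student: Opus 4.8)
The plan is to read off the unitary dual from Kirillov's orbit method and then to transport the induced representation to $L^{2}(\mathbb{R}^{d})$ by an explicit Baker--Campbell--Hausdorff (BCH) computation. The two preceding lemmas have already supplied the structural input: the generic layer is $\Omega=\{\lambda:\det S(\lambda)\neq0\}$, the manifold $\Sigma$ in (\ref{Sigma}) is a cross-section for the coadjoint orbits in $\Omega$, and $\mathfrak{p}=\mathfrak{z}\oplus\mathfrak{b}$ is a polarization subordinate to every $\lambda\in\Sigma$. Since $\Sigma$ is identified with a Zariski-open subset of $\mathfrak{z}^{\ast}$, each such $\lambda$ has the form $(\lambda_{1},\dots,\lambda_{n-2d},0,\dots,0)$, and by the orbit method the family $\pi_{\lambda}=\mathrm{Ind}_{\exp\mathfrak{p}}^{N}(\chi_{\lambda})$, $\lambda\in\Sigma$, exhausts $\widehat{N}$ up to a Plancherel-null set. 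This yields the parametrization asserted in the statement; the remaining content is the explicit action formula.

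Next I would fix the realization on $L^{2}(\mathbb{R}^{d})$. Because $\mathfrak{n}=\mathfrak{a}\oplus\mathfrak{p}$ as vector spaces, the map $\sigma:\mathbb{R}^{d}\to N$, $\sigma(x)=\exp(x_{d}X_{d})\cdots\exp(x_{1}X_{1})$, is a cross-section for $N/\exp\mathfrak{p}$, and under this section the invariant measure on the quotient is Lebesgue measure on $\mathbb{R}^{d}$ up to normalization. I would then identify $\phi\in L^{2}(\mathbb{R}^{d})$ with the function $F_{\phi}\in\mathbf{H}_{\lambda}$ determined by $F_{\phi}(\sigma(x)p)=\chi_{\lambda}(p)^{-1}\phi(x)$, so that the left action $(\pi_{\lambda}(g)F)(n)=F(g^{-1}n)$ becomes $(\pi_{\lambda}(g)\phi)(x)=\chi_{\lambda}(p')^{-1}\phi(x')$, where $g^{-1}\sigma(x)=\sigma(x')p'$ is the unique factorization through the section.

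The heart of the proof is to carry out this factorization for $g=\exp(z\cdot Z)\exp(l\cdot Y)\exp(m\cdot X)$ and to read off $x'$ and $\chi_{\lambda}(p')^{-1}$. Because every bracket lands in the center $\mathfrak{z}$, the BCH series terminates at second order, so any product of one-parameter subgroups may be reordered at the cost of a single central correction $\exp(\tfrac{1}{2}[\,\cdot\,,\,\cdot\,])$ evaluated against $\lambda$. Three effects then appear. The central factors $\exp(z\cdot Z)$ contribute the pure character $e^{2\pi i\sum_{j=1}^{n-2d}z_{j}\lambda(Z_{j})}$. Commuting the $Y$-factors past $\sigma(x)$ produces the cross-terms $\lambda[X_{k},Y_{j}]$, giving $e^{-2\pi i\sum_{j,k}x_{k}l_{j}\lambda[X_{k},Y_{j}]}$. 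Finally, since $[\mathfrak{a},\mathfrak{a}]\subseteq\mathfrak{z}$ is nontrivial, merging $\exp(m\cdot X)$ with $\sigma(x)$ both translates $x\mapsto x-m$ on the quotient and leaves the residual central cocycle $e^{-2\pi i\lambda(\sum_{r<j}m_{j}x_{r}[X_{r},X_{j}])}$, where the indices $r<j$ are exactly those recorded in the upper-triangular matrix $X(\lambda)$. Assembling the three factors reproduces the formula in the statement.

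The main obstacle I anticipate is the BCH bookkeeping inside the non-abelian $\mathfrak{a}$, rather than any conceptual difficulty. One must track the order of the factors $\exp(m_{i}X_{i})$ and $\exp(x_{i}X_{i})$ and confirm that the only surviving second-order terms are the $\lambda[X_{r},X_{j}]$ with $r<j$, while checking that no spurious $\mathfrak{b}$--$\mathfrak{b}$ corrections (ruled out by Condition \ref{condition} and the maximal commutativity of $\mathfrak{z}\oplus\mathfrak{b}$) intrude and that the section $\sigma$ absorbs the translation cleanly. Verifying that the resulting operators are unitary and that $\phi\mapsto F_{\phi}$ is an isometry of $L^{2}(\mathbb{R}^{d})$ onto the completion of $\mathbf{H}_{\lambda}$ is then routine, which completes the identification.
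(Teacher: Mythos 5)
Your proposal follows essentially the same route as the paper: realize $\pi_{\lambda}=\mathrm{Ind}_{\exp(\mathfrak{z}\oplus\mathfrak{b})}^{N}(\chi_{\lambda})$ on $L^{2}(\mathbb{R}^{d})$ via the cross-section given by a product of the $\exp(x_{k}X_{k})$, then commute the $Y$- and $X$-factors past the section using the step-two structure to read off the character, the $\lambda[X_{k},Y_{j}]$ phase, and the residual central cocycle. The only caveat is your ordering of the section: you take $\sigma(x)=\exp(x_{d}X_{d})\cdots\exp(x_{1}X_{1})$, whereas the paper uses $\exp(x_{1}X_{1})\cdots\exp(x_{d}X_{d})$, and with your ordering the surviving second-order terms would carry indices $r>j$ rather than the $r<j$ appearing in the stated cocycle, so that ordering must be reversed (or the signs adjusted) to land exactly on the formula in the statement.
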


\begin{proof}
We recall that $\pi_{\lambda}$ acts in the Hilbert completion of
\begin{equation}
\mathbf{H}_{\lambda}=\left\{
\begin{array}
[c]{c}%
f:N\xrightarrow{\hspace*{1cm}}\mathbb{C}\text{ such that }f\left(  xy\right)  =\chi_{\lambda
}\left(  y\right)  ^{-1}f\left(  x\right)  \text{ for }y\in\exp\left(
\mathfrak{z}\oplus\mathfrak{b}\right) \\
\text{and }x\in N/\exp\left(  \mathfrak{z}\oplus\mathfrak{b}\right)  \text{
and }\int_{N/\exp\left(  \mathfrak{z}\oplus\mathfrak{b}\right)  }\left\vert
f\left(  x\right)  \right\vert ^{2}d\overline{x}<\infty
\end{array}
\right\}  \label{Hl}%
\end{equation}
as follows%
\[
\pi_{\lambda}\left(  \exp W\right)  \phi\left(
{\displaystyle\prod\limits_{k=1}^{d}}
\exp\left(  x_{k}X_{k}\right)  \right)  =\phi\left(  \exp\left(  -W\right)
{\displaystyle\prod\limits_{k=1}^{d}}
\exp\left(  x_{k}X_{k}\right)  \right)  .
\]
Next, we observe that the map $\beta:%
\mathbb{R}
^{d}\times\exp\left(  \mathfrak{z}\oplus\mathfrak{b}\right)  \rightarrow N$%
\[
\left(  \left(  x_{1},x_{2},\cdots,x_{d}\right)  ,\exp X\right)  \mapsto
\exp\left(  x_{1}X_{1}\right)  \exp\left(  x_{2}X_{2}\right)  \cdots
\exp\left(  x_{d}X_{d}\right)  \exp X
\]
is a diffeomorphism. Based on the properties of the Hilbert
space (\ref{Hl}), for $X\in\mathfrak{z}\oplus\mathfrak{b,}$ and $\phi
\in\mathbf{H}_{\lambda}$ if 
\[
n=\exp\left(  x_{1}X_{1}\right)  \exp\left(  x_{2}X_{2}\right)  \cdots
\exp\left(  x_{d}X_{d}\right)  \exp X
\]
then $\phi\left(  n\right)  =\phi\left(  \exp\left(  x_{1}X_{1}\right)
\exp\left(  x_{2}X_{2}\right)  \cdots\exp\left(  x_{d}X_{d}\right)  \right)
e^{-2\pi i\lambda\left(  X\right)  }.$ Thus, we may naturally identify the
Hilbert completion of $\mathbf{H}_{\lambda}$ with $$
L^{2}\left(
{\displaystyle\prod\limits_{k=1}^{d}}
\exp\left(
\mathbb{R}
X_{k}\right)  \right)  \cong L^{2}(\mathbb{R}^{d}).$$ Now, we will compute the action of $\pi_{\lambda}.$ Letting $Y_{j}%
\in\mathfrak{b},$ then $$
\pi_{\lambda}\left(  \exp\left(  l_{j}Y_{j}\right)  \right)  \phi\left(
{\displaystyle\prod\limits_{k=1}^{d}}
\exp\left(  x_{k}X_{k}\right)  \right)  =\phi\left(  \exp\left(  -l_{j}%
Y_{j}\right)
{\displaystyle\prod\limits_{k=1}^{d}}
\exp\left(  x_{k}X_{k}\right)  \right)  .$$ Next, let $\mathbf{x}=%
{\displaystyle\prod\limits_{k=1}^{d}}
\exp\left(  x_{k}X_{k}\right)  .$
\begin{align*}
\exp\left(  -l_{j}Y_{j}\right)  \mathbf{x}  &  =\mathbf{x}\left(
\mathbf{x}^{-1}\exp\left(  -l_{j}Y_{j}\right)  \mathbf{x}\right) \\
&  =\mathbf{x}\exp\left(  -l_{j}Y_{j}+\sum_{k=1}^{d}x_{k}l_{j}\left[
X_{k},Y_{j}\right]  \right) \\
&  =\mathbf{x}\exp\left(  -l_{j}Y_{j}\right)  \exp\left(  \sum_{k=1}^{d}%
x_{k}l_{j}\left[  X_{k},Y_{j}\right]  \right)  .
\end{align*}
Thus, $\pi_{\lambda}\left(  \exp l_{j}Y_{j}\right)  \phi\left(  x\right)
=e^{-2\pi i\left(  \sum_{k=1}^{d}x_{k}l_{j}\lambda\left[  X_{k},Y_{j}\right]
\right)  }\phi\left(  x\right)  $ and
\[
\pi_{\lambda}\left(  \exp\left(  -m_{1}X_{1}\right)  \right)  \phi\left(
(\exp\left(  x_{1}X_{1}\right)
{\displaystyle\prod\limits_{k=2}^{d}}
\exp\left(  x_{k}X_{k}\right)  \right)  =\phi\left(  \exp\left(  \left(
x_{1}-m_{1}\right)  X_{1}\right)
{\displaystyle\prod\limits_{k=2}^{d}}
\exp\left(  x_{k}X_{k}\right)  \right)  .
\]
Also, for $j>1,$ since%
\[
\exp\left(  -x_{r}X_{r}\right)  \exp\left(  -m_{j}X_{j}\right)  \exp\left(
x_{r}X_{r}\right)  =\exp\left(  -m_{j}X_{j}+x_{r}m_{j}\left[  X_{r}%
,X_{j}\right]  \right)
\]
then $\exp\left(  -m_{j}X_{j}\right)  \exp\left(  x_{r}X_{r}\right)
=\exp\left(  x_{r}X_{r}\right)  \exp\left(  -m_{j}X_{j}+x_{r}m_{j}\left[
X_{r},X_{j}\right]  \right)  $ and
\begin{align*}
\exp\left(  -m_{j}X_{j}\right)  \mathbf{x}  &  =\exp\left(  x_{1}X_{1}\right)
\exp\left(  x_{2}X_{2}\right)  \cdots\\
&  \exp\left(  \left(  x_{j}-m_{j}\right)  X_{j}\right)  \cdots\exp\left(
x_{d}X_{d}\right) \\
&  \times\exp\left(  \sum_{r=1}^{j-1}m_{j}x_{r}\left[  X_{r},X_{j}\right]
\right)  .
\end{align*}
Thus,
\begin{align*}
\pi_{\lambda}\left(  \exp\left(  -m_{j}X_{j}\right)  \right)  \phi\left(
\mathbf{x}\right)   &  =e^{-2\pi i\lambda\left(  \sum_{r=1}^{j-1}m_{j}%
x_{r}\left[  X_{r},X_{j}\right]  \right)  }\times\\
&  \phi(\exp\left(  x_{1}X_{1}\right)  \exp\left(  x_{2}X_{2}\right)
\cdots\exp\left(  \left(  x_{j}-m_{j}\right)  X_{j}\right) \\
&  \cdots\exp\left(  x_{d}X_{d}\right)  ).
\end{align*}
Finally, $\pi_{\lambda}\left(  \exp s_{j}Z_{j}\right)  \phi\left(  x\right)
=e^{2\pi i\lambda\left(  \exp s_{j}Z_{j}\right)  }\phi\left(  x\right)  .$ In
conclusion, identifying $%
\mathbb{R}
^{d}\ $with $N/\exp\left(  \mathfrak{z}\oplus\mathfrak{b}\right)  ,$
\[
\pi_{\lambda}\left(  \exp l_{j}Y_{j}\right)  \phi\left(  x\right)  =e^{-2\pi
i\lambda\left(  \sum_{k=1}^{d}x_{k}l_{j}\left[  X_{k},Y_{j}\right]  \right)
}\phi\left(  x\right)
\]
For $j=1,$ $\pi_{\lambda}\left(  \exp\left(  m_{1}X_{1}\right)  \right)
\phi\left(  x\right)  =\phi\left(  x_{1}-m_{1},x_{2},\cdots,x_{j},\cdots
x_{d}\right)  .$ For $j>1,$ we obtain%
\[
\pi_{\lambda}\left(  \exp\left(  m_{j}X_{j}\right)  \right)  \phi\left(
x\right)  =e^{-2\pi i\lambda\left(  \sum_{r=1}^{j-1}m_{j}x_{r}\left[
X_{r},X_{j}\right]  \right)  }\phi\left(  x_{1},x_{2},\cdots,x_{j}%
-m_{j},\cdots x_{d}\right)
\]
and finally, $
\pi_{\lambda}\left(  \exp s_{j}Z_{j}\right)  \phi\left(  x\right)  =e^{2\pi
i\lambda\left(  s_{j}Z_{j}\right)  }\phi\left(  x\right).$ Thus, the proposition is proved by putting the elements $\exp m_k X_k$ in the
appropriate order.
\end{proof}

\begin{example}
Let $N$ be a nilpotent Lie group with Lie algebra spanned by $Z,Y_{2}%
,Y_{1},X_{2},X_{1}$ with non-trivial Lie brackets $
\lbrack X_{1},X_{2}]=[X_{1},Y_{1}]=[X_{2},Y_{2}]=Z.$ The unitary dual of $N$ is parametrized by $$
\Sigma=\left\{  \lambda\in\mathfrak{n}^{\ast}:\lambda\left(  Z\right)
\neq0,\lambda\left(  Y_{2}\right)  =\lambda\left(  Y_{1}\right)
=\lambda\left(  X_{2}\right)  =\lambda\left(  X_{1}\right)  =0\right\}.$$ With some straightforward computations, we obtain that $$
\pi_{\lambda}(z_{2}Z_{2})\pi_{\lambda}(z_{1}Z_{1})\pi_{\lambda}(l_{2}Y_{2}%
)\pi_{\lambda}(l_{1}Y_{1})\pi_{\lambda}(k_{2}X_{2})\pi_{\lambda}(k_{1}%
X_{1})v(x_{1},x_{2})$$ is equal to
\[
e^{2\pi z_{2}i\lambda}e^{2\pi z_{1}i\lambda}e^{-2\pi ix_{2}l_{2}\lambda
}e^{-2\pi ix_{1}l_{1}\lambda}e^{-2\pi ix_{1}k_{2}\lambda}v(x_{1}-k_{1}%
,x_{2}-k_{2})
\]
where $v\in L^{2}(\mathbb{R}^{2}).$
\end{example}
\subsection{Proof of Theorem \ref{M1}}
Let $\Lambda$ be a full rank lattice in $\mathbb{R}^{2d}$ and $v\in
L^{2}\left(  \mathbb{R}^{d}\right)  $. We recall that the family of functions in
$L^{2}\left(  \mathbb{R}^{d}\right)  $: $
\mathcal{G}\left(  v,\Lambda\right)  =\left\{  e^{2\pi i\left\langle
k,x\right\rangle }v\left(  x-n\right)  :\left(  n,k\right)  \in\Lambda
\right\}$ is called a \textbf{Gabor system}. 
We are now ready to prove Theorem \ref{M1}.  We will show that if $\lambda\in\Sigma,$ and $v\in L^{2}\left(\mathbb{R}^{d}\right)  ,$ then $
\pi_{\lambda}\left(  \Gamma_{1}\right)  v=\mathcal{G}\left(v,  B\left(
\lambda\right)\mathbb{Z}^{2d}\right)$ where $B\left(  \lambda\right)  $ is a square matrix of order $2d$ described as follows.  \label{0}%
\begin{equation}
B\left(  \lambda\right)  =\left[
\begin{array}
[c]{cc}%
1_{d,d} & 0_{d,d}\\
-X\left(  \lambda\right)  & -S\left(  \lambda\right)
\end{array}
\right]  ,\text{ }S\left(  \lambda\right)  =\left[
\begin{array}
[c]{ccc}%
\lambda\left[  X_{1},Y_{1}\right]  & \cdots & \lambda\left[  X_{1}%
,Y_{d}\right] \\
\vdots & \ddots & \vdots\\
\lambda\left[  X_{d},Y_{1}\right]  & \cdots & \lambda\left[  X_{d}%
,Y_{d}\right]
\end{array}
\right]  , \label{MatrixB}%
\end{equation}
and $X\left(  \lambda\right)  $ is a matrix with entries in the dual of the
vector space $\left[  \mathfrak{a},\mathfrak{a}\right]  $ given by
\[
X\left(  \lambda\right)  =\left[
\begin{array}
[c]{ccccc}%
0 & \lambda\left[  X_{1},X_{2}\right]  & \lambda\left[  X_{1},X_{3}\right]  &
\cdots & \lambda\left[  X_{1},X_{d}\right] \\
\vdots & 0 & \lambda\left[  X_{2},X_{3}\right]  & \cdots & \lambda\left[
X_{2},X_{d}\right] \\
&  & \ddots & \cdots & \vdots\\
\vdots &  &  & 0 & \lambda\left[  X_{d-1},X_{d}\right] \\
0 & \cdots &  & \cdots & 0
\end{array}
\right] 
\]
\begin{proof}[Proof of Theorem \ref{M1}]
 Regarding $B\left(  \lambda\right)  $ as a linear operator acting on $\mathbb{R}^{2d}$ which we identify with
\[\mathbb{R}\text{-span }\left\{  X_{1},X_{2}\cdots X_{d},Y_{1},Y_{2},\cdots
,Y_{d}\right\}  ,
\]
we obtain
\[
B\left(  \lambda\right)  \left[
\begin{array}
[c]{c}%
m_{1}\\
\vdots\\
m_{d}\\
l_{1}\\
\vdots\\
l_{d}%
\end{array}
\right]  =\left[
\begin{array}
[c]{c}%
m_{1}\\
\vdots\\
m_{d}\\
-\sum_{k=1}^{d}l_{k}\lambda\left[  X_{1},Y_{k}\right]  -\sum_{k=2}^{d}%
m_{k}\lambda\left[  X_{1},X_{k}\right] \\
\vdots\\
-\sum_{k=1}^{d}l_{k}\lambda\left[  X_{d-1},Y_{k}\right]  -m_{k-1}%
\lambda\left[  X_{d-1},X_{d}\right] \\
\sum_{k=1}^{d}l_{k}\lambda\left[  X_{d},Y_{k}\right]
\end{array}
\right]  .
\]
Appealing to Proposition \ref{irreducible}, we compute
\begin{align*}
&  \pi_{\lambda}\left(  \exp\left(  l_{d}Y_{d}\right)  \cdots\exp\left(
l_{1}Y_{1}\right)  \exp\left(  m_{d}X_{d}\right)  \cdots\exp\left(  m_{1}%
X_{1}\right)  \right)  v\left(  x\right) \\
&  =e^{-2\pi i\left(  \sum_{j=1}^{d}\sum_{k=1}^{d}x_{k}l_{j}\lambda\left[
X_{k},Y_{j}\right]  +\sum_{j=2}^{d}\sum_{r=1}^{j-1}m_{j}x_{r}\left[
X_{j},X_{r}\right]  \right)  }\times v\left(  x_{1}-m_{1},\cdots,x_{d}%
-m_{d}\right)  .
\end{align*}
Factoring all the terms multiplying $x_{1},x_{2},\cdots,$ and $x_{d}$ in
\[
-\sum_{j=1}^{d}\sum_{k=1}^{d}x_{k}l_{j}\lambda\left[  X_{k},Y_{j}\right]
-\sum_{j=2}^{d}\sum_{r=1}^{j-1}m_{j}x_{r}\left[  X_{j},X_{r}\right]  ,
\]
we obtain that $\pi_{\lambda}\left(  \Gamma_{1}\right)  v=\mathcal{G}\left(v,
B\left(  \lambda\right)\mathbb{Z}^{2d}\right)  $ and
\[
\det B\left(  \lambda\right)  =-\det S\left(  \lambda\right)  \det\left(
1_{d,d}\right)  -\det X\left(  \lambda\right)  \times\det\left(
0_{d,d}\right)  =-\det S\left(  \lambda\right)  .
\]

\end{proof}

\begin{example}
Let $N$ be a simply connected, connected nilpotent Lie group with Lie algebra
spanned by the ordered basis
$
\{Z_{3},Z_{2},Z_{1},Y_{2},Y_{1},X_{2},X_{1}\}
$
with the following non-trivial Lie brackets
\begin{align*}
\left[  X_{1},X_{2}\right]   &  =Z_{3},\left[  X_{1},Y_{1}\right]
=Z_{1},\left[  X_{1},Y_{2}\right]  =Z_{2}, \left[  X_{2},Y_{1}\right]  =Z_{2},\left[  X_{2},Y_{2}\right]  =Z_{1}.
\end{align*}
Given $v\in L^{2}\left(\mathbb{R}^{4}\right)  ,$ we have $\pi_{\lambda}\left(  \Gamma_{1}\right)
v=\mathcal{G}\left(v,  B\left(  \lambda\right)\mathbb{Z}^{4}\right)  $ and
\[
B\left(  \lambda\right)  =\left[
\begin{array}
[c]{cccc}%
1 & 0 & 0 & 0\\
0 & 1 & 0 & 0\\
0 & -\lambda\left(  Z_{3}\right)  & -\lambda\left(  Z_{1}\right)  &
-\lambda\left(  Z_{2}\right) \\
0 & 0 & -\lambda\left(  Z_{2}\right)  & -\lambda\left(  Z_{1}\right)
\end{array}
\right]  .
\]

\end{example}
\subsection{Plancherel Theory}\label{Plancherel}
Now, we recall well-known facts about the Plancherel theory for the class of
groups considered in this paper. Assume that $N$ is endowed with its canonical
Haar measure which is the Lebesgue measure in our situation.  For
$\lambda=\left(  \lambda_{1},\cdots,\lambda_{n-2d},0,\cdots,0\right)
\in\Sigma,$ (see (\ref{Sigma}))
\begin{equation}
d\mu\left(  \lambda\right)  =\left\vert \det B\left(  \lambda\right)
\right\vert d\lambda=\left\vert \det S\left(  \lambda\right)  \right\vert
d\lambda\label{mu}%
\end{equation}
is the Plancherel measure (see chapter 4 in \cite{Corwin}), and the matrix
$B\left(  \lambda\right)  $ is defined in (\ref{MatrixB}). We have
\[
\mathcal{F}:L^{2}\left(  N\right)  \rightarrow\int_{\Sigma}^{\oplus}%
L^{2}\left(  \mathbb{R}^{d}\right)  \otimes L^{2}\left(  \mathbb{R}%
^{d}\right)  d\mu\left(  \lambda\right)
\]
where the Fourier transform is defined on $L^{2}(N)\cap L^{1}(N)$ by $$
\mathcal{F}\left(  f\right)  \left(  \lambda\right)  =\int_{\Sigma}  f\left(  n\right)
\pi_{\lambda}\left(  n\right)  dn$$ and the Plancherel transform $\mathcal{P}$ is the \textbf{extension} of the Fourier transform to
$L^{2}(N)$ inducing the equality
\[
\left\Vert f\right\Vert _{L^{2}\left(  N\right)  }^{2}=\int_{\Sigma}\left\Vert
\mathcal{P}\left(  f\right)  \left(  \lambda\right)  \right\Vert
_{\mathcal{HS}}^{2}d\mu\left(  \lambda\right)  .
\]
In fact, $||\cdot||_{\mathcal{HS}}$ denotes the Hilbert-Schmidt norm on
$L^{2}\left(  \mathbb{R}^{d}\right)  \otimes L^{2}\left(  \mathbb{R}%
^{d}\right)  $. We recall that the inner product of two rank-one operators in $L^{2}\left(
\mathbb{R}^{d}\right)  \otimes L^{2}\left(  \mathbb{R}^{d}\right)  $ is given
by $\left\langle u\otimes v,w\otimes y\right\rangle _{\mathcal{HS}%
}=\left\langle u,w\right\rangle _{L^{2}\left(  \mathbb{R}^{d}\right)
}\left\langle v,y\right\rangle _{L^{2}\left(  \mathbb{R}^{d}\right)  }.$ We
also have that $
L\cong\mathcal{P}L\mathcal{P}^{-1}=\int_{\Sigma}^{\oplus}\pi_{\lambda}%
\otimes\mathbf{1}_{L^{2}\left(  \mathbb{R}^{d}\right)  }d\mu\left(
\lambda\right),$ and $\mathbf{1}_{L^{2}\left(  \mathbb{R}^{d}\right)  }$ is the identity
operator on $L^{2}\left(  \mathbb{R}^{d}\right)  .$ Finally, for $\lambda
\in\Sigma,$ it is well-known that
\begin{equation}
\mathcal{P}(L(x)\phi)(\lambda)=\pi_{\lambda}(x)\circ\left(  \mathcal{P}%
\phi\right)  (\lambda). \label{rep}%
\end{equation}

\section{Reconstruction of Band-limited Vectors and Proof of Theorem \ref{M}}
\subsection{Properties of Band-limited Hilbert Subspaces}
We will start this section by introducing a natural concept of band-limitation on the
class of groups considered in this paper. The following set will be of special
interest, and we will be mainly interested in multiplicity-free subspaces.
Let
\[
\mathbf{E}=\left\{  \lambda\in\Sigma:\left\vert \det S\left(  \lambda\right)
\right\vert \leq1\right\}
\]
and let $\mathbf{m}$\textbf{ }be the Lebesgue measure on $\mathfrak{z}^{\ast
}.$ We remark that depending on the structure constants of the Lie algebra, the set
$\mathbf{E}$ is either bounded or unbounded. Furthermore, we need the
following lemma to hold.

\begin{lemma}
$\mathbf{E}$ is a set of positive Lebesgue measure
\end{lemma}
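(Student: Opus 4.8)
The plan is to exploit two features of $\det S(\lambda)$: it is a nontrivial \emph{homogeneous} polynomial, and its vanishing locus is a proper algebraic set. First I would recall that under the identification of $\Sigma$ (see \eqref{Sigma}) with a Zariski open subset of $\mathfrak{z}^{\ast}\cong\mathbb{R}^{n-2d}$, membership in $\Sigma$ already forces $\det S(\lambda)\neq 0$, so that
\[
\mathbf{E}=\left\{\lambda\in\mathfrak{z}^{\ast}:0<\left\vert\det S(\lambda)\right\vert\leq 1\right\}.
\]
By assumption $4$ of Condition \ref{condition}, the function $P(\lambda):=\det S(\lambda)$ is a nontrivial polynomial on $\mathfrak{z}^{\ast}$, and since each entry $\lambda[X_{i},Y_{j}]$ is linear in $\lambda$, the determinant $P$ is homogeneous of degree $d$, that is, $P(t\lambda)=t^{d}P(\lambda)$ for all $t\in\mathbb{R}$.

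Next I would record the two standard facts I intend to use. The zero set $\mathcal{Z}=\{\lambda\in\mathfrak{z}^{\ast}:P(\lambda)=0\}$ of a nontrivial real polynomial has Lebesgue measure zero; consequently $\Omega=\{\lambda:P(\lambda)\neq 0\}$ is open, dense, and of full measure in $\mathfrak{z}^{\ast}$. I would then introduce the set
\[
U=\left\{\lambda\in\mathfrak{z}^{\ast}:\left\vert P(\lambda)\right\vert<1\text{ and }P(\lambda)\neq 0\right\},
\]
which is open, being the intersection of the open set $\{|P|<1\}$ (open by continuity of $P$) with the open set $\Omega$. Clearly $U\subseteq\mathbf{E}$, so it suffices to show $\mathbf{m}(U)>0$, and for this it is enough to show that $U$ is nonempty, since every nonempty open subset of Euclidean space has strictly positive Lebesgue measure.

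To see that $U\neq\emptyset$ I would use homogeneity as a scaling argument. Because $P$ is nontrivial, there exists $\lambda_{0}\in\mathfrak{z}^{\ast}$ with $c:=P(\lambda_{0})\neq 0$. Along the ray $t\mapsto t\lambda_{0}$ we have $P(t\lambda_{0})=t^{d}c$, so $\left\vert P(t\lambda_{0})\right\vert=t^{d}\left\vert c\right\vert$ for $t>0$. Choosing $t>0$ small enough that $t^{d}\left\vert c\right\vert<1$ yields a point $t\lambda_{0}$ with $0<\left\vert P(t\lambda_{0})\right\vert<1$, hence $t\lambda_{0}\in U$. Therefore $U$ is a nonempty open set, $\mathbf{m}(\mathbf{E})\geq\mathbf{m}(U)>0$, and the lemma follows.

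There is no serious obstacle here: the argument is entirely elementary once the homogeneity of $\det S(\lambda)$ and the nontriviality guaranteed by Condition \ref{condition} are in hand. The only point requiring a word of care is the claim that the real-algebraic set $\mathcal{Z}$ is Lebesgue-null, which I would either cite as standard or justify by induction on the number of variables (a nonzero polynomial cannot vanish on a set of positive measure, since restricting to almost every line parallel to a coordinate axis gives a one-variable polynomial with finitely many roots, and Fubini's theorem then forces $\mathbf{m}(\mathcal{Z})=0$). This null-set fact is what guarantees that the scaling construction lands inside $\Sigma$ rather than on the excluded variety $\{\det S=0\}$.
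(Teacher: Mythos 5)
Your proof is correct and follows essentially the same route as the paper: both arguments rest on the homogeneity of $\det S(\lambda)$ to produce points near the origin where $0<\left\vert\det S(\lambda)\right\vert\leq 1$. The only cosmetic difference is that you conclude positivity of measure from the nonemptiness of the open set $\{0<\vert\det S(\lambda)\vert<1\}$, while the paper subtracts the Lebesgue-null variety $\{\det S(\lambda)=0\}$ from a small coordinate box; both are valid.
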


\begin{proof}
Since $\det S\left(  \lambda\right)  $ is a homogeneous polynomial, there
exists $a>0$ such that
\[
\left\{  \lambda\in\mathfrak{z}^{\ast}:\left\vert \lambda_{k}\right\vert
<a\text{ and }\left\vert \det S\left(  \lambda\right)  \right\vert
\neq0\right\}  \subset\mathbf{E.}%
\]
It is easy to see that
\[
\left\{  \lambda\in\mathfrak{z}^{\ast}:\left\vert \lambda_{k}\right\vert
<a\text{ and }\left\vert \det S\left(  \lambda\right)  \right\vert
\neq0\right\}  =\left\{  \lambda\in\mathfrak{z}^{\ast}:\left\vert \lambda
_{k}\right\vert <a\text{ }\right\}  -\left\{  \lambda\in\mathfrak{z}^{\ast
}:\det S\left(  \lambda\right)  =0\right\}  .
\]
However $\left\{  \lambda\in\mathfrak{z}^{\ast}:\det S\left(  \lambda\right)
=0\right\}  $ is a set of $\mathbf{m}$-measure zero. As a result,
\[
\mathbf{m}\left(  \left\{  \lambda\in\mathfrak{z}^{\ast}:\left\vert
\lambda_{k}\right\vert <a\text{ and }\left\vert \det S\left(  \lambda\right)
\right\vert \neq0\right\}  \right)  =\mathbf{m}\left(  \left\{  \lambda
\in\mathfrak{z}^{\ast}:\left\vert \lambda_{k}\right\vert <a\text{ }\right\}
\right)  .
\]
Thus, $\mathbf{m}\left(  \left\{  \lambda\in\mathfrak{z}^{\ast}:\left\vert
\lambda_{k}\right\vert <a\text{ and }\left\vert \det S\left(  \lambda\right)
\right\vert \neq0\right\}  \right)  >0$ and it follows that $\mathbf{E}$ is a
set of positive Lebesgue measure.
\end{proof}

\begin{definition}
Let $\mathbf{A}\subset\Sigma$ be a measurable bounded set$.$ We say a function
$f\in L^{2}(N)$ is $\mathbf{A}$-\textbf{band-limited} if its Plancherel
transform is supported on $\mathbf{A.\ }$Fix $\mathbf{u=}$ $\left\{
\mathbf{u}_{\lambda}:\lambda\in\Sigma\right\}  $ a measurable field of unit
vectors in $L^{2}\left(  \mathbb{R}^{d}\right)  $ which is parametrized by
$\Sigma.$ The Hilbert space
\[
\mathbf{H}_{\mathbf{u}}\text{ }\mathbf{=}\text{ }\mathcal{P}^{-1}\left(
\int_{\Sigma}^{\oplus}L^{2}\left(  \mathbb{R}^{d}\right)  \otimes
\mathbf{u}_{\lambda}\text{ }d\mu\left(  \lambda\right)  \right)
\]
is a multiplicity-free subspace of $L^{2}\left(  N\right)  .$ For any
measurable subset of $\mathbf{A}$ of $\Sigma,$ we define the Hilbert space%
\begin{equation}
\mathbf{H}_{\mathbf{u,A}}\text{ }\mathbf{=}\text{ }\mathcal{P}^{-1}\left(
\int_{\mathbf{A}}^{\oplus}L^{2}\left(  \mathbb{R}^{d}\right)  \otimes
\mathbf{u}_{\lambda}\text{ }d\mu\left(  \lambda\right)  \right)  . \label{H}%
\end{equation}

\end{definition}

Clearly $\mathbf{H}_{\mathbf{u,A}}$ is a Hilbert subspace of $L^{2}(N)$ which
contains vectors whose Fourier transforms are rank-one operators and are
supported on the set $\mathbf{A}$.  Next, we recall the following standard facts in frame theory. A sequence $\left\{  f_{n}:n\in\mathbb{Z}\right\}  $ of elements in a Hilbert
space $\mathbf{H}$ is called a \textbf{frame} \cite{Han,Han Yang Wang,Heil,Kornelson} if there are constant $A,B>0$
such that
\[
A\left\Vert f\right\Vert ^{2}\leq{\displaystyle\sum_n}\left\vert \left\langle f,f_{n}\right\rangle \right\vert ^{2}\leq B\left\Vert
f\right\Vert ^{2}\text{ for all }f\in\mathbf{H}.
\]
The numbers $A,B$ in the definition of a frame are called \textbf{lower} and
\textbf{upper bounds} respectively. A frame is a \textbf{tight frame} if $A=B$
and a \textbf{normalized tight frame} or \textbf{Parseval frame} if $A=B=1.$

\begin{definition}A set in a Hilbert space $\mathbf{H}$ is total if the closure of its linear span is equal to $\mathbf{H}.$\end{definition} 

We will need the following theorem known as the \textbf{Density Theorem for Lattices} (\cite{Heil} Theorem 10). 
\begin{theorem} (\textbf{Density Theorem} )\label{density} Let $v\in L^{2}\left(\mathbb{R}^{d}\right)  $ and let $\Lambda=A\mathbb{Z}^{2d}$ where $A$ is an invertible matrix of order $2d.$ Then the following holds

\begin{enumerate}
\item If $\left\vert \det\left(  A\right)  \right\vert >1$ then $\mathcal{G}%
\left(  v,\Lambda\right)  $ is not total in $L^{2}\left(\mathbb{R}^{d}\right)  .$

\item If $\mathcal{G}\left(  v,\Lambda\right)  $ is a frame for $L^{2}\left(\mathbb{R}^{d}\right)  $ then $0<\left\vert \det\left(  A\right)  \right\vert \leq1.$
\end{enumerate}
\end{theorem}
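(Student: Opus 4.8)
This is the classical density theorem for lattice Gabor systems, and in the present paper it is invoked as a cited black box; a self-contained argument would follow one of two standard routes, and I sketch the one I find cleanest below. Throughout write $\pi(n,k)v(x)=e^{2\pi i\langle k,x\rangle}v(x-n)$ for the time-frequency shift, so that $\mathcal{G}(v,\Lambda)=\{\pi(\lambda)v:\lambda\in\Lambda\}$ with $\Lambda=A\mathbb{Z}^{2d}$ and covolume $|\det A|$. The starting observation is that the shifts satisfy $\pi(\lambda)\pi(\lambda')=c(\lambda,\lambda')\,\pi(\lambda+\lambda')$ for a unimodular cocycle $c$, so that $\{\pi(\lambda):\lambda\in\Lambda\}$ generates a von Neumann algebra $\mathcal{M}$ on $L^{2}(\mathbb{R}^{d})$; moreover the Bessel/frame operator $S_{v}f=\sum_{\lambda\in\Lambda}\langle f,\pi(\lambda)v\rangle\,\pi(\lambda)v$ commutes with every $\pi(\mu)$, $\mu\in\Lambda$ (as $\mu+\lambda$ runs over $\Lambda$ and $|c|=1$), hence lies in the commutant $\mathcal{M}'$.

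The heart of the matter is that $\mathcal{M}'$ carries a canonical finite trace $\tau$ with respect to which the Murray--von Neumann dimension of $L^{2}(\mathbb{R}^{d})$, viewed as an $\mathcal{M}$-module, equals the covolume $|\det A|$; equivalently $\tau(S_{v})$ is proportional to $\|v\|^{2}$. I would then argue as follows. For part (2), if $\mathcal{G}(v,\Lambda)$ is a frame then $S_{v}$ is bounded above and below by positive multiples of the identity and the closed span of the orbit is all of $L^{2}(\mathbb{R}^{d})$; passing to the canonical tight window $S_{v}^{-1/2}v$ and evaluating $\tau$ on the corresponding range projection, which is the identity, forces $|\det A|\le 1$. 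For part (1), if $|\det A|>1$ then the module dimension exceeds $1$, while the commutant $\mathcal{M}'$ is generated by the denser adjoint lattice $\Lambda^{\circ}$ of covolume $|\det A|^{-1}<1$; this algebra is then too large for any single vector $v$ to have total $\pi(\Lambda)$-orbit, and one produces a nonzero $f\perp\mathcal{G}(v,\Lambda)$ directly from a nontrivial element of $\mathcal{M}'$. This is Rieffel's incompleteness argument.

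An alternative, more hands-on route avoids operator algebras and runs through the homogeneous approximation property together with the Ramanathan--Steger comparison theorem: a Gabor frame satisfies the homogeneous approximation property, the critically sampled orthonormal Gabor basis is a Riesz sequence of covolume $1$, and comparing their Beurling densities yields $|\det A|\le 1$ for any frame, and incompleteness whenever $|\det A|>1$.

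The main obstacle in either approach is that the sharp threshold is genuinely non-soft. A naive trace or counting argument applied to the frame inequality $A\|f\|^{2}\le\sum_{\lambda}|\langle f,\pi(\lambda)v\rangle|^{2}\le B\|f\|^{2}$ only yields $|\det A|\le B/A$ in terms of the frame bounds, and never the critical value $1$. The real work lies in either identifying the coupling constant (equivalently, the Murray--von Neumann dimension) with the covolume, or in establishing the homogeneous approximation property; each of these is the technical crux and is precisely what the cited references supply. For the purposes of this paper it is therefore most economical to quote the result from \cite{Heil} rather than reproduce this machinery.
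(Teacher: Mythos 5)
The paper gives no proof of this theorem at all: it is quoted as Theorem 10 of \cite{Heil} and used as a black box, which is exactly what you recommend in your closing paragraph. Your sketch of the two standard proof routes (Rieffel's coupling-constant/von Neumann dimension argument and the Ramanathan--Steger homogeneous approximation property comparison) is accurate, and you correctly locate the genuine difficulty in identifying the sharp threshold $\left\vert \det A\right\vert \leq 1$ rather than the soft bound $B/A$ coming from the frame inequality.
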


In light of the theorem above, we have the following. 

\begin{proposition}
Let $\mathbf{J}$ be a measurable subset of $\Sigma.$ If $\mathbf{J-E}$ is a
set of positive measure then it is not possible to find a function
$g\in\mathbf{H}_{\mathbf{u,J}}$ such that $L\left(  \Gamma\right)  g$ is total
in $\mathbf{H}_{\mathbf{u,J}}.$ In other words, the representation $(L,\mathbf{H}_{\mathbf{u,J}}) $ is not cyclic. 
\end{proposition}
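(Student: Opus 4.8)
The plan is to argue by contradiction, using the Plancherel machinery from Section \ref{Plancherel} together with the Density Theorem (Theorem \ref{density}) to locate the obstruction on the piece of $\mathbf{J}$ lying outside $\mathbf{E}$. Suppose for contradiction that there exists $g\in\mathbf{H}_{\mathbf{u},\mathbf{J}}$ with $L(\Gamma)g$ total in $\mathbf{H}_{\mathbf{u},\mathbf{J}}$. The first step is to transport the statement ``$L(\Gamma)g$ is total'' through the Plancherel transform $\mathcal{P}$. Using the intertwining relation (\ref{rep}), $\mathcal{P}(L(\gamma)g)(\lambda)=\pi_\lambda(\gamma)\circ(\mathcal{P}g)(\lambda)$, so on the fiber over $\lambda$ the action of $L(\Gamma)$ becomes the action of $\pi_\lambda(\Gamma)$ on the rank-one operator $(\mathcal{P}g)(\lambda)=v_\lambda\otimes\mathbf{u}_\lambda$. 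Since the multiplicity-free space $\mathbf{H}_{\mathbf{u},\mathbf{J}}$ is, fiberwise, $L^2(\mathbb{R}^d)\otimes\mathbf{u}_\lambda$, and $\pi_\lambda\otimes\mathbf{1}$ acts only in the first tensor factor, totality of $L(\Gamma)g$ must force, for a.e.\ $\lambda\in\mathbf{J}$, that $\pi_\lambda(\Gamma)v_\lambda$ — equivalently the Gabor system generated by $v_\lambda$ — is total in $L^2(\mathbb{R}^d)$.

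The second step is to reduce the action of the full $\Gamma$ to the action of $\Gamma_1$, so that Theorem \ref{M1} applies. Since $\lambda\in\Sigma$ annihilates $\mathfrak{b}\oplus\mathfrak{a}$ and the central exponentials $\exp(\mathbb{Z}Z_j)$ act by the scalars $e^{2\pi i\lambda(s_jZ_j)}$ (Proposition \ref{irreducible}), the central directions contribute only unimodular scalars and hence do not enlarge the closed span of $\pi_\lambda(\Gamma)v_\lambda$ beyond that of $\pi_\lambda(\Gamma_1)v_\lambda$. By Theorem \ref{M1}, $\pi_\lambda(\Gamma_1)v_\lambda=\mathcal{G}(v_\lambda,B(\lambda)\mathbb{Z}^{2d})$ with $|\det B(\lambda)|=|\det S(\lambda)|$. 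Thus totality of $L(\Gamma)g$ in the fiber forces $\mathcal{G}(v_\lambda,B(\lambda)\mathbb{Z}^{2d})$ to be total in $L^2(\mathbb{R}^d)$ for a.e.\ $\lambda\in\mathbf{J}$.

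The third step applies the first part of the Density Theorem: if $|\det B(\lambda)|>1$, then $\mathcal{G}(v_\lambda,B(\lambda)\mathbb{Z}^{2d})$ is \emph{not} total in $L^2(\mathbb{R}^d)$, no matter what $v_\lambda$ is. Now observe that $\mathbf{J}-\mathbf{E}$ is precisely the set of $\lambda\in\mathbf{J}$ with $|\det S(\lambda)|>1$, i.e.\ with $|\det B(\lambda)|>1$. By hypothesis this set has positive measure, so on a positive-measure subset of $\mathbf{J}$ the fiber Gabor system fails to be total — contradicting the conclusion of the previous step. Since a positive-measure set of fibers on which totality fails prevents $L(\Gamma)g$ from being total in $\mathbf{H}_{\mathbf{u},\mathbf{J}}$, no such $g$ can exist, and $(L,\mathbf{H}_{\mathbf{u},\mathbf{J}})$ is not cyclic.

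The step I expect to be the main obstacle is the first one: making rigorous the passage from ``$L(\Gamma)g$ is total in $\mathbf{H}_{\mathbf{u},\mathbf{J}}$'' to ``the fiber system is total for a.e.\ $\lambda$''. The subtlety is that totality is a statement about closed linear spans in the direct-integral Hilbert space, and one must justify that a closed-span condition globally is equivalent to (or at least implies) an a.e.\ fiberwise closed-span condition. The clean way to do this is to use the duality characterization of totality: $L(\Gamma)g$ is total iff the only $h\in\mathbf{H}_{\mathbf{u},\mathbf{J}}$ with $\langle L(\gamma)g,h\rangle=0$ for all $\gamma\in\Gamma$ is $h=0$. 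Writing this inner product as a fiber integral via Plancherel and invoking a measurable-selection argument, one produces — on the positive-measure set where fiber totality fails — a nonzero measurable field $h_\lambda$ orthogonal to every $\pi_\lambda(\Gamma)v_\lambda$, which assembles into a nonzero $h\in\mathbf{H}_{\mathbf{u},\mathbf{J}}$ annihilating all $L(\gamma)g$, the desired contradiction. Handling the measurability of this selection carefully is where the real work lies; the remaining steps are direct invocations of Theorem \ref{M1} and Theorem \ref{density}.
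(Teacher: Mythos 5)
Your proposal is correct and follows essentially the same route as the paper: pass through the Plancherel transform to the fibers, identify $\pi_\lambda(\Gamma_1)v_\lambda$ as the Gabor system $\mathcal{G}(v_\lambda,B(\lambda)\mathbb{Z}^{2d})$ with $|\det B(\lambda)|=|\det S(\lambda)|>1$ on $\mathbf{J}-\mathbf{E}$, and invoke part (1) of the Density Theorem to kill totality on a positive-measure set of fibers. The only difference is that you flag and sketch the measurable-selection argument needed to convert fiberwise non-totality into a nonzero global orthogonal vector, a step the paper simply asserts; this is a worthwhile refinement rather than a different proof.
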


\begin{proof}
To prove Part 1, if $\mathbf{J-E}$ is a non-null set, by the Density Theorem for lattices (see \ref{density}) $\pi_{\lambda}\left(
\Gamma\right)  \circ\mathcal{P}g\left(  \lambda\right)  =\pi_{\lambda}\left(
\Gamma\right)  u_{\lambda}\otimes\mathbf{u}_{\lambda}$ cannot be total in
$L^{2}\left(\mathbb{R}^{d}\right)  \otimes\mathbf{u}_{\lambda}$ for all $\lambda\in\mathbf{J-E}$
since $\pi_{\lambda}\left(  \Gamma\right)  u_{\lambda}=\mathcal{G}\left(
u_{\lambda}, B\left(  \lambda\right)  \mathbb{Z}%
^{2d}\right)  $ cannot be total in $L^{2}\left(\mathbb{R}^{d}\right)  $ for any $\lambda\in\mathbf{J-E.}$ Thus $\overline
{\mathcal{P}\left(  \text{\textrm{span} }\left(  L\left(  \Gamma\right)
g\right)  \right)  }$ is contained but not equal to $\int_{\mathbf{J}}%
^{\oplus}L^{2}\left(  \mathbb{R}^{d}\right)  \otimes\mathbf{u}_{\lambda}%
d\mu\left(  \lambda\right)  .$
\end{proof}

Let $\mathbf{K}$ be a measurable fundamental domain for $\mathbb{Z}^{n-2d}\cap\mathfrak{z}^{\ast}$ such that $\mathbf{m}\left(  \mathbf{K\cap
E}\right)  $ is positive. Clearly such set always exists. In fact, we define $\mathbf{K}$ to be the
unit cube around the zero linear functional in $\mathfrak{z}^{\ast}$ as follows:
\[
\mathbf{K=}\left\{  \lambda\in\mathfrak{z}^{\ast}:\lambda\left(  Z_{k}\right)
\in\left[  -\frac{1}{2},\frac{1}{2}\right]  \text{ for }1\leq k\leq
n-2d\right\}.
\] Put $
\mathbf{I}=\mathbf{E}\cap\mathbf{K}.$ 

\begin{definition}
We say a set $\mathcal{T}$ is a \textbf{tiling set} for a lattice
$\mathcal{L}$ if and only 
\begin{enumerate}
\item ${\displaystyle\bigcup\limits_{l\in\mathcal{L}}}\left(  \mathcal{T+}l\right)  =\mathbb{R}^{d}$ a.e. 

\item $\left(  \mathcal{T+}l\right)  \cap\left(  \mathcal{T+}l^{\prime
}\right)  $ has Lebesgue measure zero for any $l\neq l^{\prime}$ in
$\mathcal{L}.$
\end{enumerate}
\end{definition}

\begin{definition}
We say that $\mathcal{T}$ is a \textbf{packing set} for a lattice
$\mathcal{L}$ if and only if $\left(  \mathcal{T+}l\right)  \cap\left(
\mathcal{T+}l^{\prime}\right)  $ has Lebesgue measure zero for any $l\neq
l^{\prime}$ in $\mathcal{L}$. 
\end{definition}

Let $M$ be a matrix of order $d.$ We define the norm of $M$ as follows.
\[
\left\Vert M\right\Vert_{\infty} =\sup\left\{  Mx:x\in\mathbb{R}^{d},\left\Vert x\right\Vert _{\max}=1\right\}  \text{ where }\left\Vert
x\right\Vert _{\max}=\max_{1\leq k\leq d}\left\vert x_{k}\right\vert.
\]
Now, put
\[
\mathbf{Q=}\left\{  \lambda\in\Sigma\mathbf{:}\text{ }\left\Vert S\left(
\lambda\right)  ^{Tr}\right\Vert_{\infty} <1\right\}  .
\]
It is clear that $\mathbf{Q}$ is a set of positive measure. 

\begin{lemma}\label{gab}
For any $\lambda\in\mathbf{Q,}$ then $\left[  -\frac{1}{2},\frac{1}{2}\right]
^{d}$ is a packing set for $S\left(  \lambda\right)  ^{-Tr}\mathbb{Z}^{d}$ and a tiling set for $\mathbb{Z}^{d}.$
\end{lemma}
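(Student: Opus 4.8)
The plan is to treat the two assertions independently, since the tiling claim for $\mathbb{Z}^{d}$ is standard and the packing claim for $S\left(\lambda\right)^{-Tr}\mathbb{Z}^{d}$ is the substantive part. For the tiling statement, I would simply observe that the half-open cube $\left[-\frac{1}{2},\frac{1}{2}\right)^{d}$ is a fundamental domain for $\mathbb{Z}^{d}$, so the translates $\left[-\frac{1}{2},\frac{1}{2}\right]^{d}+n$ for $n\in\mathbb{Z}^{d}$ cover $\mathbb{R}^{d}$, while any two distinct translates meet only along a common face, which has Lebesgue measure zero. This verifies both conditions in the definition of a tiling set.

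For the packing claim, I would first record the elementary geometric fact that, since $\mathcal{T}=\left[-\frac{1}{2},\frac{1}{2}\right]^{d}$ is convex and symmetric about the origin, two translates $\mathcal{T}+l$ and $\mathcal{T}+l'$ overlap in positive Lebesgue measure precisely when $l-l'$ lies in the interior of the difference set $\mathcal{T}-\mathcal{T}=\left[-1,1\right]^{d}$, that is, when $\left\Vert l-l'\right\Vert_{\max}<1$. Hence it suffices to show that for every nonzero $m\in\mathbb{Z}^{d}$ the lattice point $S\left(\lambda\right)^{-Tr}m$ satisfies $\left\Vert S\left(\lambda\right)^{-Tr}m\right\Vert_{\max}\geq 1$.

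The key estimate uses the hypothesis $\lambda\in\mathbf{Q}$, namely $\left\Vert S\left(\lambda\right)^{Tr}\right\Vert_{\infty}<1$. Writing $y=S\left(\lambda\right)^{-Tr}m$, so that $m=S\left(\lambda\right)^{Tr}y$, the definition of the operator norm $\left\Vert\cdot\right\Vert_{\infty}$ relative to $\left\Vert\cdot\right\Vert_{\max}$ yields $\left\Vert m\right\Vert_{\max}=\left\Vert S\left(\lambda\right)^{Tr}y\right\Vert_{\max}\leq\left\Vert S\left(\lambda\right)^{Tr}\right\Vert_{\infty}\left\Vert y\right\Vert_{\max}<\left\Vert y\right\Vert_{\max}$, the final inequality being strict because $m\neq 0$ forces $y\neq 0$. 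Since $m$ is a nonzero integer vector, $\left\Vert m\right\Vert_{\max}\geq 1$, and therefore $\left\Vert y\right\Vert_{\max}>1$. Thus every nonzero lattice point of $S\left(\lambda\right)^{-Tr}\mathbb{Z}^{d}$ lies strictly outside $\left[-1,1\right]^{d}$, so the translates $\left\{\mathcal{T}+S\left(\lambda\right)^{-Tr}n:n\in\mathbb{Z}^{d}\right\}$ are pairwise essentially disjoint and the packing property follows.

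I do not anticipate a genuine obstacle here; the only point that deserves care is the geometric characterization of positive-measure overlap in terms of the difference set $\mathcal{T}-\mathcal{T}=\left[-1,1\right]^{d}$, after which everything reduces to the single norm inequality above. It is worth noting that the argument yields the strict bound $\left\Vert y\right\Vert_{\max}>1$, so distinct translates are in fact strictly separated rather than merely touching, which is exactly the feature that will make the density-theoretic arguments of the previous subsection applicable.
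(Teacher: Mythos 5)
Your proof is correct and follows essentially the same route as the paper's: both reduce the packing claim to the estimate $\left\Vert S(\lambda)^{Tr}v\right\Vert_{\max}\leq\left\Vert S(\lambda)^{Tr}\right\Vert_{\infty}\left\Vert v\right\Vert_{\max}<\left\Vert v\right\Vert_{\max}$ together with the fact that a nonzero integer vector has max-norm at least $1$. The paper runs this as a contradiction from a putative common point $\sigma_{1}+\kappa_{1}=\sigma_{2}+\kappa_{2}$ of two translates, whereas you argue the contrapositive through the difference set $[-1,1]^{d}$; the mathematical content is identical.
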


\begin{proof}
Clearly $\left[  -\frac{1}{2},\frac{1}{2}\right)  ^{d}$ is a tiling set for $\mathbb{Z}^{d}.$ To show that the lemma holds, it suffices to show that $\left[
-\frac{1}{2},\frac{1}{2}\right)  ^{d}$ is a packing set for $S\left(
\lambda\right)  ^{-Tr}\mathbb{Z}^{d}.$ Let us suppose that there exist $\kappa_{1},\kappa_{2}\in S\left(
\lambda\right)  ^{-Tr}\mathbb{Z}^{d}$ and $\sigma_{1},\sigma_{2}\in\left[  -\frac{1}{2},\frac{1}{2}\right]
^{d},$ $\sigma_{1}\neq\sigma_{2}$ such that $\sigma_{1}+\kappa_{1}=\sigma
_{2}+\kappa_{2}.$ Then there exist $j_{2},j_{1}\in\mathbb{Z}^{d}$ such that $\sigma_{1}-\sigma_{2}=\left(  S\left(  \lambda\right)
^{Tr}\right)  ^{-1}\left(  j_{2}-j_{1}\right)  .$ So $S\left(  \lambda\right)
^{Tr}\left(  \sigma_{1}-\sigma_{2}\right)  =j_{2}-j_{1}$ and $\left\Vert
S\left(  \lambda\right)  ^{Tr}\left(  \sigma_{1}-\sigma_{2}\right)
\right\Vert _{\max}=\left\Vert j_{2}-j_{1}\right\Vert _{\max}.$ Since
$j_{2}-j_{1}\neq0$ then $$\left\Vert S\left(  \lambda\right)  ^{Tr}\left(
\sigma_{1}-\sigma_{2}\right)  \right\Vert _{\max}\geq1$$ and $\left\Vert
S\left(  \lambda\right)  ^{Tr}\left(  \sigma_{1}-\sigma_{2}\right)
\right\Vert _{\max}\leq\left\Vert S\left(  \lambda\right)  ^{Tr}\right\Vert_{\infty}
<1.$ Thus $$1>\left\Vert S\left(  \lambda\right)  ^{Tr}\left(  \sigma
_{1}-\sigma_{2}\right)  \right\Vert _{\max}\geq1.$$ That would be a
contradiction. 
\end{proof}

From now on, we will assume that $\mathbf{I}$ is replaced with $\mathbf{Q}\cap\mathbf{I}.$  \begin{example}
Let $N$ be a nilpotent Lie group with Lie algebra $\mathfrak{n}$ spanned by
\[
\left\{  Z_{2},Z_{1},Y_{2},Y_{1},X_{2},X_{1}\right\}
\]
with the following non-trivial Lie brackets.
\begin{align*}
\left[  X_{1},X_{2}\right]    & =Z_{2},\left[  X_{1},Y_{1}\right]  =Z_{1},
\left[  X_{2},Y_{1}\right]     =Z_{2},\left[  X_{3},Y_{1}\right]  =-Z_{1},
\left[  X_{1},Y_{2}\right]    & =Z_{2},\left[  X_{2},Y_{2}\right]  =-Z_{1}\\
\left[  X_{3},Y_{2}\right]     &=Z_{1},\left[  X_{1},Y_{3}\right]  =-Z_{1},
\left[  X_{2},Y_{3}\right]     =Z_{1},\left[  X_{3},Y_{3}\right]  =Z_{2}.
\end{align*}
Let $\lambda\in\mathfrak{n}^{\ast}$, we write $\lambda=\left(  \lambda
_{1},\lambda_{2},\lambda_{3},\cdots,\lambda_{n}\right)  $ where $\lambda
_{k}=\lambda\left(  Z_{k}\right)  .$ Then
\[
\mathbf{I}=\left\{
\begin{array}
[c]{c}%
\left(  \lambda_{1},\lambda_{2},0,\cdots,0\right)  \in\mathfrak{z}^{\ast
}:-3\lambda_{1}^{2}\lambda_{2}-\lambda_{2}^{3}\neq0, \left\vert 3\lambda_{1}^{2}\lambda_{2}+\lambda_{2}^{3}\right\vert
\leq1,2\left\vert \lambda_{1}\right\vert +\left\vert \lambda_{2}\right\vert <1\\ -1/2\leq \lambda_1, \lambda_2\leq 1/2
\end{array}
\right\} .
\]
\end{example}
Next, we define the unitary operator $\mathcal{U}:L^{2}\left(\mathbb{R}^{d}\right)  \xrightarrow{\hspace*{1cm}} L^{2}\left(\mathbb{R}^{d}\right)  $ such that
\[
\mathcal{U}f\left(  t\right)  =e^{-2\pi i\left\langle t,X\left(
\lambda\right)  t\right\rangle }f\left(  t\right)
\]

\begin{lemma} \label{pf}
For every linear functional $\lambda\in\mathbf{I}$,$$
\mathcal{G}\left(  \left\vert \det S\left(  \lambda\right)  \right\vert
^{1/2}\mathcal{U}\chi_{\left[  -1/2,1/2\right]  ^{d}},B\left(  \lambda\right)
\mathbb{Z}^{2d}\right)$$ is a Parseval frame in $L^{2}\left(\mathbb{R}^{d}\right)  .$
\end{lemma}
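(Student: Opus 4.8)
The plan is to prove the Parseval identity $\sum_{\gamma}\lvert\langle f,\gamma\rangle\rvert^{2}=\lVert f\rVert_{L^{2}(\mathbb{R}^{d})}^{2}$ directly for every $f$, where $\gamma$ ranges over $\mathcal{G}\!\left(v,B(\lambda)\mathbb{Z}^{2d}\right)$ with $v=\lvert\det S(\lambda)\rvert^{1/2}\mathcal{U}\chi_{Q}$ and $Q=[-1/2,1/2]^{d}$. First I would make the elements of this system completely explicit. By Theorem \ref{M1}, a typical element is $e^{2\pi i\langle k,x\rangle}v(x-n)$ where $(n,k)$ is obtained by applying $B(\lambda)$ to the column vector with blocks $m$ and $l$, so that $n=m$ and $k=-X(\lambda)m-S(\lambda)l$ for $(m,l)\in\mathbb{Z}^{2d}$. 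Writing out $v(x-m)$ together with the modulation, the element is
\[
\psi_{m,l}(x)=\lvert\det S(\lambda)\rvert^{1/2}\,e^{-2\pi i\langle X(\lambda)m+S(\lambda)l,\,x\rangle}\,e^{-2\pi i\langle x-m,\,X(\lambda)(x-m)\rangle}\,\chi_{Q}(x-m).
\]

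The conceptual reason the lemma holds is that the chirp $\mathcal{U}$ is exactly what converts the \emph{sheared} lattice $B(\lambda)\mathbb{Z}^{2d}$ into the product lattice $\mathbb{Z}^{d}\times S(\lambda)\mathbb{Z}^{d}$ in phase space, corresponding to the factorization $B(\lambda)=\left[\begin{smallmatrix}1_{d,d}&0_{d,d}\\-X(\lambda)&1_{d,d}\end{smallmatrix}\right]\left[\begin{smallmatrix}1_{d,d}&0_{d,d}\\0_{d,d}&-S(\lambda)\end{smallmatrix}\right]$ separating a unipotent shear from a scaling. Concretely, once I substitute $x=y+m$ with $y\in Q$ in $\langle f,\psi_{m,l}\rangle$, every factor coming from the chirp and from the cross-term $X(\lambda)m$ is unimodular and either factors out of the integral as a constant phase or is absorbed into $\Phi_{m}(y)=f(y+m)\,e^{2\pi i\langle X(\lambda)m,y\rangle}\,e^{2\pi i\langle y,X(\lambda)y\rangle}$, a function with $\lVert\Phi_{m}\rVert_{L^{2}(Q)}=\lVert f\rVert_{L^{2}(Q+m)}$. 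Thus I expect to obtain $\lvert\langle f,\psi_{m,l}\rangle\rvert^{2}=\lvert\det S(\lambda)\rvert\,\lvert\langle\Phi_{m},\,e^{-2\pi i\langle S(\lambda)l,\cdot\rangle}\rangle_{L^{2}(Q)}\rvert^{2}$.

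The key step is a sub-lemma: the normalized exponentials $\{\lvert\det S(\lambda)\rvert^{1/2}e^{2\pi i\langle S(\lambda)l,\cdot\rangle}\chi_{Q}:l\in\mathbb{Z}^{d}\}$ form a Parseval frame for $L^{2}(Q)$. I would prove this by observing that the dual lattice of $S(\lambda)^{-Tr}\mathbb{Z}^{d}$ is precisely $S(\lambda)\mathbb{Z}^{d}$, so these normalized exponentials are an orthonormal basis of $L^{2}(F)$ for any fundamental domain $F$ of $S(\lambda)^{-Tr}\mathbb{Z}^{d}$ (its covolume $\lvert\det S(\lambda)\rvert^{-1}$ fixes the normalization constant). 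Since $\lambda\in\mathbf{I}\subseteq\mathbf{Q}$, Lemma \ref{gab} gives that $Q$ is a packing set for $S(\lambda)^{-Tr}\mathbb{Z}^{d}$, and because $\lvert\det S(\lambda)\rvert\leq1$ on $\mathbf{E}\supseteq\mathbf{I}$ the cube $Q$ embeds, up to a null set, into such an $F$. Hence $L^{2}(Q)$ is a closed subspace of $L^{2}(F)$, and the orthogonal projection of the above orthonormal basis onto $L^{2}(Q)$ — which is exactly the restricted family — is a Parseval frame for $L^{2}(Q)$.

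With the sub-lemma in hand I would finish by iterated summation. Summing over $l$ and applying the sub-lemma yields $\sum_{l}\lvert\langle f,\psi_{m,l}\rangle\rvert^{2}=\lVert\Phi_{m}\rVert_{L^{2}(Q)}^{2}=\int_{Q+m}\lvert f\rvert^{2}$; then summing over $m$ and using that $Q$ tiles $\mathbb{Z}^{d}$ (Lemma \ref{gab}) gives $\sum_{m,l}\lvert\langle f,\psi_{m,l}\rangle\rvert^{2}=\int_{\mathbb{R}^{d}}\lvert f\rvert^{2}$, the desired Parseval identity. I expect the main obstacle to be the sub-lemma: matching the normalization $\lvert\det S(\lambda)\rvert^{1/2}$ precisely and rigorously upgrading the packing statement of Lemma \ref{gab} to the ``projection of an orthonormal basis'' description of the restricted exponentials. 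The remainder is bookkeeping, the crucial and slightly surprising feature being that the chirp and all cross-modulation terms disappear from the frame bound because they are unimodular.
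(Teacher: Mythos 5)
Your proof is correct and rests on the same two pillars as the paper's: absorbing the chirp $\mathcal{U}$ and the shear $X(\lambda)$ so that only the separable system $\mathbb{Z}^d\times S(\lambda)\mathbb{Z}^d$ with window $\left\vert\det S(\lambda)\right\vert^{1/2}\chi_{[-1/2,1/2]^d}$ remains, and then exploiting that the cube tiles for $\mathbb{Z}^d$ and packs for $S(\lambda)^{-Tr}\mathbb{Z}^d$ (Lemma \ref{gab}). The differences are in execution, and they are worth noting. First, where the paper black-boxes the separable case by citing Proposition 3.1 of Pfander--Rashkov--Wang, you prove it from scratch via the observation that the normalized exponentials over the dual lattice $S(\lambda)\mathbb{Z}^d$ form an orthonormal basis of $L^2(F)$ for a fundamental domain $F$ of $S(\lambda)^{-Tr}\mathbb{Z}^d$, and that a packing set injects into $F$ up to null sets (using the $\Lambda$-periodicity of the exponentials to move pieces of the cube by lattice translations); this is essentially the Han--Wang argument underlying the cited result, so your proof is self-contained where the paper's is not. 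Second, you carry out the chirp absorption inside the inner product $\langle f,\psi_{m,l}\rangle$ rather than as an operator conjugation. This is actually a safer route: the paper's displayed identity $\mathcal{U}M_{-S(\lambda)l}T_k\mathcal{U}^{-1}=M_{-S(\lambda)l-X(\lambda)k}T_k$ contains a slip (the line $\mathcal{U}^{-1}v(t-k)=e^{2\pi i\langle t,X(\lambda)(t-k)\rangle}v(t-k)$ should read $e^{2\pi i\langle t-k,X(\lambda)(t-k)\rangle}v(t-k)$, after which the conjugation produces modulation by $-S(\lambda)l-(X(\lambda)+X(\lambda)^{Tr})k$ up to a constant phase, not by $-S(\lambda)l-X(\lambda)k$). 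Your computation shows this discrepancy is harmless for the Parseval identity, because after restricting to $Q+m$ and substituting $x=y+m$, every $X(\lambda)$-dependent factor is a unimodular function of $y$ independent of $l$, hence absorbed into $\Phi_m$ without changing $\lVert\Phi_m\rVert_{L^2(Q)}$; the sum over $l$ then sees only the exponentials $e^{2\pi i\langle S(\lambda)l,y\rangle}$, and the sum over $m$ uses the tiling. So your argument not only reproduces the lemma but quietly repairs the one computational inaccuracy in the paper's version.
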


\begin{proof}
Given $v\in L^{2}\left(\mathbb{R}^{d}\right),$ we write $M_{l}v\left(  t\right)  =e^{2\pi i\left\langle
k,t\right\rangle }v\left(  t\right)  $ and $T_{k}v\left(  t\right)  =v\left(
t-k\right)  .$ Thus,
\[
\mathcal{G}\left(  v,B\left(  \lambda\right)\mathbb{Z}^{2d}\right)  =\left\{  M_{-S\left(  \lambda\right)  l-X\left(  \lambda
\right)  k}T_{k}v:\left(  k,l\right)  \in\mathbb{Z}^{2d}\right\}  .
\]
Next, we will see that $\mathcal{U}M_{-S\left(  \lambda\right)  l}%
T_{k}\mathcal{U}^{-1}v\left(  t\right)  =M_{-S\left(  \lambda\right)
l-X\left(  \lambda\right)  k}T_{k}v\left(  t\right)  .$ Indeed,
\begin{align*}
\mathcal{U}M_{-S\left(  \lambda\right)  l}T_{k}\mathcal{U}^{-1}v\left(
t\right)    & =e^{-2\pi i\left\langle t,X\left(  \lambda\right)
t\right\rangle }M_{-S\left(  \lambda\right)  l}T_{k}\mathcal{U}^{-1}v\left(
t\right)  \\
& =e^{-2\pi i\left\langle t,X\left(  \lambda\right)  t\right\rangle }e^{-2\pi
i\left\langle S\left(  \lambda\right)  l,t\right\rangle }\mathcal{U}%
^{-1}v\left(  t-k\right)  \\
& =e^{-2\pi i\left\langle t,X\left(  \lambda\right)  t\right\rangle }e^{-2\pi
i\left\langle S\left(  \lambda\right)  l,t\right\rangle }e^{2\pi i\left\langle
t,X\left(  \lambda\right)  \left(  t-k\right)  \right\rangle }v\left(
t-k\right)  \\
& =e^{-2\pi i\left\langle t,X\left(  \lambda\right)  t\right\rangle }e^{-2\pi
i\left\langle S\left(  \lambda\right)  l,t\right\rangle }e^{2\pi i\left\langle
t,X\left(  \lambda\right)  t\right\rangle }e^{-2\pi i\left\langle t,X\left(
\lambda\right)  k\right\rangle }v\left(  t-k\right)  \\
& =e^{-2\pi i\left\langle S\left(  \lambda\right)  l,t\right\rangle }e^{-2\pi
i\left\langle t,X\left(  \lambda\right)  k\right\rangle }v\left(  t-k\right)
\\
& =M_{-S\left(  \lambda\right)  l-X\left(  \lambda\right)  k}T_{k}v\left(
t\right)  .
\end{align*}
Put $w=\mathcal{U}^{-1}v.$ Then
$
\mathcal{U}M_{-S\left(  \lambda\right)  l}T_{k}\mathcal{U}^{-1}v=\mathcal{U}%
M_{-S\left(  \lambda\right)  l}T_{k}w.
$
Now, let $w=\left\vert \det S\left(  \lambda\right)  \right\vert ^{1/2}%
\chi_{E\left(  \lambda\right)  }$ such that $E\left(  \lambda\right)  $ is a
tiling set for $\mathbb{Z}^{d}$ and a packing set for $\left(  S\left(  \lambda\right)  ^{Tr}\right)
^{-1}\mathbb{Z}^{d}.$ Then
$
\mathcal{G}\left(  w,\mathbb{Z}^{d}\times S\left(  \lambda\right)\mathbb{Z}^{d}\right)
$
is a Parseval frame in $L^{2}\left(
\mathbb{R}
^{d}\right)  $  (see Proposition 3.1 \cite{ Pfander}) and  $
\mathcal{G}\left(  \left\vert \det S\left(  \lambda\right)  \right\vert
^{1/2}\mathcal{U}\chi_{E\left(  \lambda\right)  },B\left(  \lambda\right)
\mathbb{Z}
^{2d}\right)$ is a Parseval frame in $L^{2}\left(
\mathbb{R}
^{d}\right).  $ The proof of the lemma is completed by replacing $E\left(
\lambda\right)  $ with $\left[  -1/2,1/2\right]  ^{d}.$ 
\end{proof}

\begin{proposition}\label{pro}
Let $\mathbf{H}_{\mathbf{u,I}}$ be a Hilbert space consisting of $\mathbf{I}%
$-band-limited functions. There exists a function $f$ in $\mathbf{H}%
_{\mathbf{u,I}}$ such that $L\left(  \Gamma\right)  f$ is a Parseval frame in
$\mathbf{H}_{\mathbf{u,I}}$ and
$\left\Vert f\right\Vert _{\mathbf{H}%
_{\mathbf{u},\mathbf{I}}}^{2}=\mu\left(  \mathbf{I}\right)  .$

\end{proposition}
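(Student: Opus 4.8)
The plan is to build $f$ directly on the Plancherel side and then verify the frame identity fiber by fiber. I would define $f\in\mathbf{H}_{\mathbf{u,I}}$ by prescribing its Plancherel transform to be the rank-one field $\mathcal{P}f(\lambda)=a_{\lambda}\otimes\mathbf{u}_{\lambda}$ for $\lambda\in\mathbf{I}$ (and zero elsewhere), where $a_{\lambda}=\mathcal{U}\chi_{[-1/2,1/2]^{d}}$ is the \emph{unit-vector} window underlying Lemma \ref{pf}. Since $\mathcal{U}$ is unitary and $\chi_{[-1/2,1/2]^{d}}$ has unit norm, each $a_{\lambda}$ is a unit vector, so $\|f\|_{\mathbf{H}_{\mathbf{u,I}}}^{2}=\int_{\mathbf{I}}\|a_{\lambda}\|^{2}\,d\mu(\lambda)=\int_{\mathbf{I}}d\mu(\lambda)=\mu(\mathbf{I})$, which settles the norm claim at once. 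It then remains to show $\sum_{\gamma\in\Gamma}|\langle g,L(\gamma)f\rangle|^{2}=\|g\|^{2}$ for every $g\in\mathbf{H}_{\mathbf{u,I}}$, where I write $\mathcal{P}g(\lambda)=w_{\lambda}\otimes\mathbf{u}_{\lambda}$.

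Next I would factor the sampling set as $\Gamma=\Gamma_{0}\Gamma_{1}$, where $\Gamma_{0}=\exp(\mathbb{Z}Z_{n-2d})\cdots\exp(\mathbb{Z}Z_{1})$ is the central part (indexed by $z\in\mathbb{Z}^{n-2d}$) and $\Gamma_{1}$ is as in Theorem \ref{M1}; the strong Malcev coordinates guarantee that $\gamma\mapsto(\gamma_{0},\gamma_{1})$ is a bijection. Using the intertwining relation (\ref{rep}), the fact that on the multiplicity-free fibers the representation is $\pi_{\lambda}\otimes\mathbf{1}$, and that $\mathbf{u}_{\lambda}$ is a unit vector, the Hilbert--Schmidt inner product collapses: by Proposition \ref{irreducible} the central element $\gamma_{0}$ acts as the scalar $e^{2\pi i\langle z,\lambda\rangle}$, so $\langle g,L(\gamma_{0}\gamma_{1})f\rangle=\int_{\mathbf{I}}e^{-2\pi i\langle z,\lambda\rangle}\langle w_{\lambda},\pi_{\lambda}(\gamma_{1})a_{\lambda}\rangle\,|\det S(\lambda)|\,d\lambda$. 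For each fixed $\gamma_{1}$, this is exactly the $z$-th Fourier coefficient, on the unit cube $\mathbf{K}$, of the function $\lambda\mapsto\langle w_{\lambda},\pi_{\lambda}(\gamma_{1})a_{\lambda}\rangle\,|\det S(\lambda)|$ extended by zero from $\mathbf{I}$ to $\mathbf{K}$.

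The computation then proceeds in two orthogonality steps. First, summing over the central lattice $z\in\mathbb{Z}^{n-2d}$ and invoking Parseval for the orthonormal basis $\{e^{2\pi i\langle z,\cdot\rangle}\}$ of $L^{2}(\mathbf{K})$ (here it is essential that $\mathbf{K}$ is a unit-volume fundamental domain and $\mathbf{I}\subseteq\mathbf{K}$) gives $\sum_{z}|\langle g,L(\gamma_{0}\gamma_{1})f\rangle|^{2}=\int_{\mathbf{I}}|\langle w_{\lambda},\pi_{\lambda}(\gamma_{1})a_{\lambda}\rangle|^{2}\,|\det S(\lambda)|^{2}\,d\lambda$. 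Second, summing over $\gamma_{1}\in\Gamma_{1}$, interchanging sum and integral by Tonelli, and applying Lemma \ref{pf}: since $|\det S(\lambda)|^{1/2}a_{\lambda}$ is the Parseval window, $\pi_{\lambda}(\Gamma_{1})a_{\lambda}=\mathcal{G}(a_{\lambda},B(\lambda)\mathbb{Z}^{2d})$ is a tight frame with bound $|\det S(\lambda)|^{-1}$, whence $\sum_{\gamma_{1}}|\langle w_{\lambda},\pi_{\lambda}(\gamma_{1})a_{\lambda}\rangle|^{2}=|\det S(\lambda)|^{-1}\|w_{\lambda}\|^{2}$. Substituting, the factors $|\det S(\lambda)|^{2}$ and $|\det S(\lambda)|^{-1}$ combine to a single $|\det S(\lambda)|$, yielding $\sum_{\gamma\in\Gamma}|\langle g,L(\gamma)f\rangle|^{2}=\int_{\mathbf{I}}\|w_{\lambda}\|^{2}\,|\det S(\lambda)|\,d\lambda=\|g\|^{2}$, as required.

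The main obstacle, and the delicate point to get right, is the bookkeeping of the factor $|\det S(\lambda)|$: it enters once through the Plancherel measure $d\mu=|\det S|\,d\lambda$ in every inner product, and a second time (as $|\det S|^{2}$) when the central sum is read as a Fourier series against flat Lebesgue measure on $\mathbf{K}$. It is precisely the interaction of these with the tight-frame bound $|\det S|^{-1}$ from Lemma \ref{pf} that forces the window to be normalized as the unit vector $a_{\lambda}=\mathcal{U}\chi_{[-1/2,1/2]^{d}}$ rather than the Parseval window $|\det S|^{1/2}\mathcal{U}\chi_{[-1/2,1/2]^{d}}$; this single normalization is exactly what makes both the Parseval-frame identity and the norm value $\mu(\mathbf{I})$ emerge simultaneously. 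A secondary technical point is justifying the fiberwise reduction, namely the measurability of the field $\lambda\mapsto a_{\lambda}$ and the legitimacy of the Fubini/Tonelli interchanges, which is routine given the measurable structure already fixed for $\mathbf{H}_{\mathbf{u,I}}$.
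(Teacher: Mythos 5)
Your proposal is correct and follows essentially the same route as the paper: the window you choose, $a_{\lambda}=\mathcal{U}\chi_{[-1/2,1/2]^{d}}$, is literally the paper's $\left\vert \det B\left(\lambda\right)\right\vert^{-1/2}\phi\left(\lambda\right)$ (since $\left\vert \det B\left(\lambda\right)\right\vert=\left\vert \det S\left(\lambda\right)\right\vert$), and your two-step computation --- Parseval for the central Fourier series on the unit cube $\mathbf{K}\supseteq\mathbf{I}$, followed by the fiberwise Gabor Parseval frame of Lemma \ref{pf} --- is exactly the paper's argument, with the powers of $\left\vert \det S\left(\lambda\right)\right\vert$ tracked explicitly rather than absorbed into the normalization of $\phi$. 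The resulting identities $\sum_{\gamma\in\Gamma}\left\vert\left\langle g,L\left(\gamma\right)f\right\rangle\right\vert^{2}=\left\Vert g\right\Vert^{2}$ and $\left\Vert f\right\Vert^{2}=\mu\left(\mathbf{I}\right)$ are obtained in the same way in both versions.
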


\begin{proof}
Define $f\in\mathbf{H}_{\mathbf{u,I}}$ such that $
\mathcal{P}f\left(  \lambda\right)  =\left\vert \det B\left(  \lambda\right)
\right\vert ^{-1/2}\phi\left(  \lambda\right)  \otimes\mathbf{u}_{\lambda}$ such that $$ \phi(\lambda)=\left\vert \det S\left(  \lambda\right)  \right\vert
^{1/2}\mathcal{U}\chi_{\left[  -1/2,1/2\right]  ^{d}}. $$ We recall that $
\mathcal{P}\left(  L\left(  \gamma\right)  f\right)  \left(  \lambda\right)
=\pi_{\lambda}\left(  \gamma\right)  \circ\mathcal{P}f\left(  \lambda\right)
=\left\vert \det B\left(  \lambda\right)  \right\vert ^{-1/2}\pi_{\lambda
}\left(  \gamma\right)  \phi\left(  \lambda\right)  \otimes\mathbf{u}%
_{\lambda}.$ Let $g$ be any function in $\mathbf{H}_{\mathbf{u,I}}$ such that
$\mathcal{P}g\left(  \lambda\right)  =u_{\lambda}\otimes\mathbf{u}_{\lambda}.$
Next, we write $\gamma\in\Gamma$ such that $\gamma=k\eta,$ where $k$ is in the
center of the Lie group $N$ and $\eta$ is in $
\Gamma_{1}=\exp\mathbb{Z}Y_{d}\cdots\exp\mathbb{Z}Y_{1}\exp\mathbb{Z}%
X_{d}\cdots\exp\mathbb{Z}X_{1}.$
\begin{align}
\sum_{\gamma\in\Gamma}\left\vert \left\langle g,L\left(  \gamma\right)
f\right\rangle _{\mathbf{H}_{\mathbf{u,I}}}\right\vert ^{2}  &  =\sum
_{\gamma\in\Gamma}\left\vert \int_{\mathbf{I}}\left\langle u_{\lambda}%
\otimes\mathbf{u}_{\lambda},\left\vert \det B\left(  \lambda\right)
\right\vert \pi_{\lambda}\left(  \gamma\right)  \left(  \left\vert \det
B\left(  \lambda\right)  \right\vert ^{-1/2}\phi\left(  \lambda\right)
\right)  \otimes\mathbf{u}_{\lambda}\right\rangle _{\mathcal{HS}}%
d\lambda\right\vert ^{2}\nonumber\\
&  =\sum_{\gamma\in\Gamma}\left\vert \int_{\mathbf{I}}\left\langle u_{\lambda
}\otimes\mathbf{u}_{\lambda},\pi_{\lambda}\left(  \gamma\right)  \left(
\left\vert \det B\left(  \lambda\right)  \right\vert ^{1/2}\phi\left(
\lambda\right)  \right)  \otimes\mathbf{u}_{\lambda}\right\rangle
_{\mathcal{HS}}d\lambda\right\vert ^{2}\nonumber\\
&  =\sum_{\gamma\in\Gamma}\left\vert \int_{\mathbf{I}}\left\langle u_{\lambda
}\otimes\mathbf{u}_{\lambda},\pi_{\lambda}\left(  \gamma\right)  \left(
\left\vert \det B\left(  \lambda\right)  \right\vert ^{1/2}\phi\left(
\lambda\right)  \right)  \otimes\mathbf{u}_{\lambda}\right\rangle
_{\mathcal{HS}}d\lambda\right\vert ^{2}\nonumber\\
&  =\sum_{\gamma\in\Gamma}\left\vert \int_{\mathbf{I}}\left\langle u_{\lambda
},\pi_{\lambda}\left(  \gamma\right)  \left\vert \det B\left(  \lambda\right)
\right\vert ^{1/2}\phi\left(  \lambda\right)  \right\rangle _{L^{2}\left(
\mathbb{R}^{d}\right)  }\left\langle \mathbf{u}_{\lambda},\mathbf{u}_{\lambda
}\right\rangle _{L^{2}\left(  \mathbb{R}^{d}\right)  }d\lambda\right\vert
^{2}\nonumber\\
&  =\sum_{\gamma\in\Gamma}\left\vert \int_{\mathbf{I}}\left\langle u_{\lambda
},\left\vert \det B\left(  \lambda\right)  \right\vert ^{1/2}\pi_{\lambda
}\left(  \gamma\right)  \phi\left(  \lambda\right)  \right\rangle
_{L^{2}\left(  \mathbb{R}^{d}\right)  }d\lambda\right\vert ^{2}\nonumber\\
&  =\sum_{\eta\in\Gamma_{1}}\sum_{k\in\mathbb{Z}^{n-2d}}\left\vert
\int_{\mathbf{I}}e^{-2\pi i\left\langle k,\lambda\right\rangle }\left\langle
u_{\lambda},\left\vert \det B\left(  \lambda\right)  \right\vert ^{1/2}%
\pi_{\lambda}\left(  \eta\right)  \phi\left(  \lambda\right)  \right\rangle
_{L^{2}\left(  \mathbb{R}^{d}\right)  }d\lambda\right\vert ^{2}. \label{down}%
\end{align}
Since $\left\{  e^{-2\pi i\left\langle k,\lambda\right\rangle }\chi
_{\mathbf{I}}\left(  \lambda\right)  :k\in%
\mathbb{Z}
\right\}  $ is a Parseval frame in $L^{2}\left(  \mathbf{I}\right)  ,$
letting
\[
c_{\eta}\left(  \lambda\right)  =\left\langle u_{\lambda},\left\vert \det
B\left(  \lambda\right)  \right\vert ^{1/2}\pi_{\lambda}\left(  \eta\right)
\phi\left(  \lambda\right)  \right\rangle _{L^{2}\left(
\mathbb{R}
^{d}\right)  },
\]
Equation (\ref{down}) becomes
\[
\sum_{\eta\in\Gamma_{1}}\sum_{k\in%
\mathbb{Z}
^{n-2d}}\left\vert \int_{\mathbf{I}}e^{2\pi i\left\langle k,\lambda
\right\rangle }c_{\eta}\left(  \lambda\right)  d\lambda\right\vert ^{2}%
=\sum_{\eta\in\Gamma_{1}}\sum_{k\in%
\mathbb{Z}
^{n-2d}}\left\vert \widehat{c}_{\eta}\left(  k\right)  \right\vert ^{2}%
=\sum_{\eta\in\Gamma_{1}}\left\Vert c_{\eta}\right\Vert _{L^{2}\left(
\mathbf{I}\right)  }^{2}.
\]
Next,
\begin{align*}
\sum_{\gamma\in\Gamma}\left\vert \left\langle g,L\left(  \gamma\right)
f\right\rangle _{\mathbf{H}_{\mathbf{u,I}}}\right\vert ^{2}  &  =\sum_{\eta
\in\Gamma_{1}}\int_{\mathbf{I}}\left\vert \left\langle u_{\lambda},\left\vert
\det B\left(  \lambda\right)  \right\vert ^{1/2}\pi_{\lambda}\left(
\eta\right)  \phi\left(  \lambda\right)  \right\rangle _{L^{2}\left(
\mathbb{R}
^{d}\right)  }\right\vert ^{2}d\lambda\\
&  =\int_{\mathbf{I}}\sum_{\eta\in\Gamma_{1}}\left\vert \left\langle
u_{\lambda},\left\vert \det B\left(  \lambda\right)  \right\vert ^{1/2}%
\pi_{\lambda}\left(  \eta\right)  \phi\left(  \lambda\right)  \right\rangle
_{L^{2}\left(
\mathbb{R}
^{d}\right)  }\right\vert ^{2}d\lambda\\
&  =\int_{\mathbf{I}}\sum_{\eta\in\Gamma_{1}}\left\vert \left\langle
u_{\lambda},\pi_{\lambda}\left(  \eta\right)  \phi\left(  \lambda\right)
\right\rangle _{L^{2}\left(
\mathbb{R}
^{d}\right)  }\right\vert ^{2}\left\vert \det B\left(  \lambda\right)
\right\vert d\lambda.
\end{align*}
Using the fact that $\mathcal{G}\left(  \phi\left(  \lambda\right)  ,B\left(
\lambda\right)
\mathbb{Z}
^{2d}\right)  $ is a Parseval frame for almost every $\lambda\in\mathbf{I}$ (see Lemma \ref{pf}), we obtain
\begin{align*}
\sum_{\eta\in\Gamma_{1}}\left\vert \left\langle u_{\lambda},\pi_{\lambda
}\left(  \eta\right)  \phi\left(  \lambda\right)  \right\rangle _{L^{2}\left(
%
\mathbb{R}
^{d}\right)  }\right\vert ^{2}  &  =\left\Vert u_{\lambda}\right\Vert
_{L^{2}\left(
\mathbb{R}
^{d}\right)  }^{2}\\
&  =\left\Vert u_{\lambda}\right\Vert _{L^{2}\left(
\mathbb{R}
^{d}\right)  }^{2}\left\Vert \mathbf{u}_{\lambda}\right\Vert _{L^{2}\left(
\mathbb{R}
^{d}\right)  }^{2}\\
&  =\left\Vert u_{\lambda}\otimes\mathbf{u}_{\lambda}\right\Vert
_{\mathcal{HS}}^{2}%
\end{align*}
and
\[
\sum_{\gamma\in\Gamma}\left\vert \left\langle g,L\left(  \gamma\right)
f\right\rangle _{\mathbf{H}_{\mathbf{u,I}}}\right\vert ^{2}=\int_{\mathbf{I}%
}\left\Vert \mathcal{P}g\left(  \lambda\right)  \right\Vert _{\mathcal{HS}%
}^{2}\left\vert \det B\left(  \lambda\right)  \right\vert d\lambda=\left\Vert
g\right\Vert _{\mathbf{H}_{\mathbf{u,I}}}^{2}.
\]
Now, to make sure that $f\in\mathbf{H}_{\mathbf{u,I}},$ we will show that its
norm is finite. Clearly,
\begin{align*}
\left\Vert f\right\Vert _{\mathbf{H}_{\mathbf{u,I}}}^{2}  &  =\int
_{\mathbf{I}}\left\Vert \mathcal{P}f\left(  \lambda\right)  \right\Vert
_{\mathcal{HS}}^{2}\left\vert \det B\left(  \lambda\right)  \right\vert
d\lambda\\
&  =\int_{\mathbf{I}}\left\Vert \left\vert \det B\left(  \lambda\right)
\right\vert ^{-1/2}\phi\left(  \lambda\right)  \otimes\mathbf{u}_{\lambda
}\right\Vert _{\mathcal{HS}}^{2}\left\vert \det B\left(  \lambda\right)
\right\vert d\lambda\\
&  =\int_{\mathbf{I}}\left\Vert \phi\left(  \lambda\right)  \otimes
\mathbf{u}_{\lambda}\right\Vert _{\mathcal{HS}}^{2}\left\vert \det B\left(
\lambda\right)  \right\vert ^{-1}\left\vert \det B\left(  \lambda\right)
\right\vert d\lambda\\
&  =\int_{\mathbf{I}}\left\Vert \phi\left(  \lambda\right)  \otimes
\mathbf{u}_{\lambda}\right\Vert _{\mathcal{HS}}^{2}d\lambda\\
&  =\int_{\mathbf{I}}\left\Vert \phi\left(  \lambda\right)  \right\Vert
_{L^{2}\left(
\mathbb{R}
^{d}\right)  }^{2}d\lambda.
\end{align*}
Since $\mathbf{I}$ is bounded then $\left\Vert f\right\Vert _{\mathbf{H}%
_{\mathbf{u,I}}}^{2}$ is clearly finite. In fact $\left\Vert \phi\left(  \lambda\right)  \right\Vert ^{2}=\left\vert
\det S\left(  \lambda\right)  \right\vert =\left\vert \det B\left(
\lambda\right)  \right\vert $ and $\left\Vert f\right\Vert _{\mathbf{H}%
_{\mathbf{u},\mathbf{I}}}^{2}=\mu\left(  \mathbf{I}\right)  .$
\end{proof}

\subsection{Proof of Theorem \ref{M}}
Finally, we are able to offer a proof of Theorem \ref{M}.
\begin{proof}[Proof of Theorem \ref{M}] Let $f$ be a function in $\mathbf{H}_{\mathbf{u},\mathbf{I}}$ such that  $
\mathcal{P}f\left(  \lambda\right)  =\mathcal{U}\chi_{\left[  -1/2,1/2\right]
^{d}}\otimes\mathbf{u}_{\lambda}.$ We recall  the coefficient function $V_{f}:\mathbf{H}_{\mathbf{u},\mathbf{I}%
}\xrightarrow{\hspace*{1cm}} L^{2}\left(  N\right)  $ such that $V_{f}h\left(  x\right)
=\left\langle h,L\left(  x\right)  f\right\rangle=h\ast f^{\ast}$ where $\ast$ is the convolution operation and $f^{\ast}(x)=\overline{f(x^{-1})}.$ We will first show that  $V_{f}$ is an isometry. In other words, $f$ is an admissible vector. 
\begin{align*}
\left\Vert V_{f}h\right\Vert ^{2}  & =\int_{\mathbf{I}}\left\Vert
\mathcal{P}h\left(  \lambda\right)  \circ\mathcal{P}\left(  f^{\ast}\right)
\left(  \lambda\right)  \right\Vert _{\mathcal{HS}}^{2}\text{ }d\mu\left(
\lambda\right)  \\
& =\int_{\mathbf{I}}\left\Vert \mathcal{P}h\left(  \lambda\right)
\circ\mathcal{P}\left(  f^{\ast}\right)  \left(  \lambda\right)  \right\Vert
_{\mathcal{HS}}^{2}\text{ }d\mu\left(  \lambda\right)  \\
& =\int_{\mathbf{I}}\left\Vert \mathcal{P}h\left(  \lambda\right)  \right\Vert
_{\mathcal{HS}}^{2}\text{ }\left\Vert \mathcal{P}\left(  f^{\ast}\right)
\left(  \lambda\right)  \right\Vert _{\mathcal{HS}}^{2}d\mu\left(
\lambda\right)  \\
& =\int_{\mathbf{I}}\left\Vert \mathcal{P}h\left(  \lambda\right)  \right\Vert
_{\mathcal{HS}}^{2}\text{ }d\mu\left(  \lambda\right)  \\
& =\left\Vert h\right\Vert ^{2}.
\end{align*}
The third equality above is justified because, the operators involved are rank-one operators. Since $f$ is an admissible vector for the representation $\left(
L,\mathbf{H}_{\mathbf{u},\mathbf{I}}\right)  $ and since $L\left(
\Gamma\right)  f$ is a Parseval frame then according to Proposition 2.54. \cite{Fuhr cont}, $V_f(\mathbf{H}_{\mathbf{u},\mathbf{I}})$ is a sampling space with respect to $\Gamma$ with sinc function $V_f (f).$
\end{proof}

\begin{example}
Let $N$ a be Lie group with Lie algebra $\mathfrak{n}$ spanned by $$\left\{  Z_{3},Z_{2},Z_{1},Y_{2},Y_{1},X_{2},X_{1}\right\}  $$ with the
following non-trivial Lie brackets.
\begin{align*}
\left[  X_{1},Y_{1}\right]    & =Z_{1},\left[  X_{1},Y_{2}\right]
=Z_{2},\left[  X_{2},Y_{1}\right]  =Z_{2}, \left[  X_{2},Y_{2}\right]  =Z_{3},\left[  X_{1},X_{2}\right]
=Z_{1}-Z_{3}.
\end{align*}
Define a bounded subset $\mathbf{I}$ of $\mathfrak{z}^{\ast}$ given by
\[
\mathbf{I=}\left\{
\begin{array}
[c]{c}%
\lambda\in\mathfrak{z}^{\ast}:\lambda\left(  Z_{1}\right)  \lambda\left(
Z_{3}\right)  -\lambda\left(  Z_{2}\right)  ^{2}\neq0,\left\vert
\lambda\left(  Z_{1}\right)  \lambda\left(  Z_{3}\right)  -\lambda\left(
Z_{2}\right)  ^{2}\right\vert \leq1\\
\max\left\{  \left\vert \lambda\left(  Z_{1}\right)  +\lambda\left(
Z_{2}\right)  \right\vert ,\left\vert \lambda\left(  Z_{2}\right)
+\lambda\left(  Z_{3}\right)  \right\vert \right\}  <1\\
\left(  \lambda\left(  Z_{1}\right)  ,\lambda\left(  Z_{2}\right)
,\lambda\left(  Z_{3}\right)  \right)  \in\left[  -1/2,1/2\right]  ^{3}%
\end{array}
\right\}  .
\]
Put $f\in$ $\mathbf{H}_{\mathbf{u},\mathbf{I}}$ such that $
\widehat{f}\left(  \lambda\right)  =e^{-2\pi i\left(  \lambda\left(
Z_{1}\right)  t_{1}t_{2}-\lambda\left(  Z_{3}\right)  t_{1}t_{2}\right)  }%
\chi_{\left[  -1/2,1/2\right]  ^{2}}\left(  t_{1},t_{2}\right)  \otimes
\mathbf{u}_{\lambda}$ where $\left\{  \mathbf{u}_{\lambda}:\lambda\in\mathbf{I}\right\}  \ $is a family of
unit vectors in $L^{2}\left(
\mathbb{R}
^{2}\right)  .$ Then $V_{f}\left(  \mathbf{H}_{\mathbf{u},\mathbf{I}}\right)  $ is a
sampling space with respect to the discrete set
\begin{align*}
\Gamma & =\exp\left(
\mathbb{Z}
Z_{3}\right)  \exp\left(
\mathbb{Z}
Z_{2}\right)  \exp\left(
\mathbb{Z}
Z_{1}\right)  \exp\left(
\mathbb{Z}
Y_{2}\right)  \\
& \exp\left(
\mathbb{Z}
Y_{1}\right)  \exp\left(
\mathbb{Z}
X_{2}\right)  \exp\left(
\mathbb{Z}
X_{1}\right)
\end{align*}
with sinc function $s=V_{f}f.$ Thus given any $h\in V_{f}\left(
\mathbf{H}_{\mathbf{u},\mathbf{I}}\right)  ,$ $h$ is determined by its sampled values
$\left(  h\left(  \gamma\right)  \right)  _{\gamma\in\Gamma}$ and $
h\left(  x\right)  =\sum_{\gamma\in\Gamma}h\left(  \gamma\right)  s\left(
\gamma^{-1}x\right).$

\end{example}
\section*{Acknowledgments}
The author thanks the anonymous reviewer for a careful and thorough reading. His suggestions and corrections greatly improved the quality of the paper.

\end{document}